\documentclass[a4paper,11pt]{amsart}
\usepackage{amstext,amsfonts,amsthm,graphicx,amssymb,amscd,epsfig}
\usepackage{amsmath}
\usepackage[ansinew]{inputenc}    %   Windows Latin 1
\usepackage{psfrag}
\usepackage{mathrsfs}
\usepackage{xypic}
\usepackage{graphicx}
\usepackage[all,knot,arc,import,poly]{xy}

\theoremstyle{definition}
\newtheorem{definition}{Definition}[section]
\newtheorem{ex}[definition]{Example}

\theoremstyle{plain}
\newtheorem{prop}[definition]{Proposition}

\newtheorem{coro}[definition]{Corollary}
\newtheorem{teo}[definition]{Theorem}

\newfont{\bbb}{msbm10 scaled\magstephalf}     %% R, C, Z -- 11pt

\setlength{\textwidth}{13.8cm} \setlength{\textheight}{20cm}

\title{The flat geometry of the $I_{1}$ singularity: $(x,y)\mapsto(x,xy,y^{2},y^{3})$}

\author{P. Benedini Riul, R. Oset Sinha}

\date{}

\address{Instituto de Ci\^encias Matem\'aticas e de Computa\c{c}\~ao - USP,
Av. Trabalhador s\~ao-carlense, 400 - Centro,
CEP: 13566-590 - S\~ao Carlos - SP, Brazil}

\email{benedini@usp.br}

\address{Departament de Matem\`{a}tiques,
Universitat de Val\`encia, Campus de Burjassot, 46100 Burjassot,
Spain}

\email{raul.oset@uv.es}

%\address{Instituto de Ci\^encias Matem\'aticas e de Computa\c{c}\~ao - USP,
%Av. Trabalhador s\~ao-carlense, 400 - Centro,
%CEP: 13566-590 - S\~ao Carlos - SP, Brazil}

%\email{maaruas@icmc.usp.br}

\thanks{Work of P. Benedini Riul supported by CAPES - PVE  88887.122685/2016-00}
\thanks{Work of R. Oset Sinha partially supported by DGICYT Grant MTM2015--64013--P}

\subjclass[2000]{Primary 57R45; Secondary 58K05, 53A05} \keywords{singular surface in 4-space, flat geometry, height function}

\begin{document}

\begin{abstract}
We study the flat geometry of the least degenerate singularity of a singular surface in $\mathbb R^4$, the $I_{1}$ singularity parametrised by $(x,y)\mapsto(x,xy,y^{2},y^{3})$. This singularity appears generically when projecting a regular surface in $\mathbb R^5$ orthogonally to $\mathbb R^4$ along a tangent direction. We obtain a generic normal form for $I_1$ invariant under diffeomorphisms in the source and isometries in the target. We then consider the contact with hyperplanes by classifying submersions which preserve the image of $I_1$. The main tool is the study of the singularities of the height function.
\end{abstract}

\maketitle

\section{Introduction}

Singularity theory has played an important role on recent results on the differential geometry of singular surfaces.
The geometry of the cross-cap (or Whitney umbrella), for
instance, has been studied in depth:
\cite{BruceWest,DiasTari,FukuiHasegawa,HHNUY,HHNSUY,BallesterosTari,OsetSinhaTari}.
Also, the cuspidal edge, the most simple type of wave front, appears
in many papers:
\cite{KRSUY,MartinsSaji,MartinsSaji2,NUY,OsetSinhaTari2,SUY}.

In \cite{MartinsBallesteros} the authors investigate the second order geometry of corank $1$ surfaces in $\mathbb{R}^{3}$. Also, singular surfaces in $\mathbb{R}^{4}$ have been taken into account in \cite{Benedini/Sinha/Ruas}, where corank $1$ surfaces are the main object of study. In that paper, the curvature parabola is defined, inspired by the curvature parabola for corank $1$ surfaces in $\mathbb{R}^{3}$ (\cite{MartinsBallesteros}) and the curvature ellipse for regular surfaces in $\mathbb{R}^{4}$ (\cite{Little}). This curve is a plane curve that may degenerate into a
half-line, a line or even a point and whose trace lies in the normal
hyperplane of the surface. This special curve carries all the second
order information of the surface at the singular point. Singular surfaces in $\mathbb{R}^{4}$ appear naturally as
projections of regular surfaces in $\mathbb{R}^{5}$ along tangent
directions. In this context, the authors associate to a regular surface $N\subset\mathbb{R}^{5}$ a corank $1$ surface $M\subset\mathbb{R}^{4}$ and a regular surface $S\subset\mathbb{R}^{4}$. Furthermore, they compare the geometry of both surfaces $M$ and $S$. An invariant called umbilic
curvature (invariant under the action of
$\mathcal{R}^{2}\times\mathcal{O}(4)$, the subgroup of $2$-jets of
diffeomorphisms in the source and linear isometries in the target)
is defined as well and used to study the singularities of the height function of corank $1$ surfaces in $\mathbb{R}^{4}$.

In \cite{Rieger}, the authors give a classification of all $\mathcal{A}$-simple map germs
$f:(\mathbb{R}^{2},0)\rightarrow(\mathbb{R}^{4},0)$. The singularity $I_{k}$ given by $(x,y)\mapsto(x,xy,y^{2},y^{2k+1})$, $k\geqslant1$ is the first singular germ to appear in this classification. In \cite{Benedini/Sinha/Ruas}, it is shown that this singularity is the only one whose curvature parabola is a non degenerate parabola. Also, when we consider $k=1$, the singularity $I_{1}$ has an interesting geometric property. In \cite{Fuster/Ruas/Tari}, the authors show that given a regular surface $N\subset\mathbb{R}^{5}$, a tangent direction $\textbf{u}$, in a point whose second fundamental form has maximal rank, is asymptotic if and only if the projection of $N$ along $\textbf{u}$ to a transverse $4$-space has a $\mathcal{A}$-singularity worse than $I_{1}$. In a way, $I_{1}$ is to singular surfaces in $\mathbb{R}^{4}$ what the cross-cap is to singular surfaces in $\mathbb{R}^{3}$.

In this paper, we investigate the flat geometry of the singularity $I_{1}$, using its height function and providing geometric conditions for each possible singularity.  Sections 2 and 3 are an overview of the
differential geometry of regular surfaces in $\mathbb{R}^{4}$ and of the the geometry of corank $1$ surfaces in $\mathbb{R}^{4}$, respectively. We bring all the definitions and results from \cite{Benedini/Sinha/Ruas} that are going to be used throughout the paper.

The last section presents our results regarding the flat geometry of a surface whose local parametrisation is $\mathcal{A}$-equivalent to the singularity $I_{1}$. We classify submersions $(\mathbb{R}^{4},0)\rightarrow(\mathbb{R},0)$ up to changes of coordinates in the source that preserve the model surface $\texttt{X}$ parametrised by $I_{1}$ (Theorem \ref{classification}). Such changes of coordinates form a geometric
subgroup $\mathcal{R}(\texttt{X})$ of the Mather group $\mathcal{R}$ (see \cite{Bruce/Roberts,Damon}). Moreover, we study the singularities of the height function of a singular surface whose parametrisation is given by a generic normal form obtained by changes of coordinates in the source and isometries in the target (Theorem \ref{teo.normal-form}). These singularities are modeled by the ones of the submersions obtained before. Finally, we provide geometrical characterizations for each type of singularity of the height function.

Aknowledgements: the authors would like to thank Professor Maria Aparecida Soares Ruas for her suggestions.

%%%%%%%%%%%%%%%%%%%%%%%%%%%%%%%%%%%%%%%%%%%%%%%%%%%%%%%%%%%%%%%%%%%%%%%%%%%%%%%%%%%%%%%%%%%%%%%%%%%%%%%%%%%%%%%%%%%%%%%%%%%%%%%%%%%%%%%
%%%%%%%%%%%%%%%%%%%%%%%%%%%%%%%%%%%%%%%%%%%%%%%%%%%%%%%%%%%%%%%%%%%%%%%%%%%%%%%%%%%%%%%%%%%%%%%%%%%%%%%%%%%%%%%%%%%%%%%%%%%%%%%%%%%%%%%

\section{The geometry of regular surfaces in $\mathbb{R}^{4}$}\label{regsurfaces}

In this section we present some aspects of regular surfaces in $\mathbb{R}^{4}$. For more details, see \cite{Livro}.
Little, in \cite{Little}, studied the second order geometry of submanifolds immersed in Euclidean spaces, in particular of immersed surfaces in $\mathbb{R}^{4}$. This paper has inspired
a lot of research on the subject (see \cite{BruceNogueira,BruceTari,GarciaMochidaFusterRuas,MochidaFusterRuas,MochidaFusterRuas2,BallesterosTari,OsetSinhaTari,RomeroFuster}, amongst others). Given a smooth surface $S\subset\mathbb{R}^{4}$ and $f:U\rightarrow\mathbb{R}^{4}$
a local parametrisation of $S$ with $U\subset\mathbb{R}^{2}$ an open subset, let
$\{\textbf{e}_{1},\textbf{e}_{2},\textbf{e}_{3},\textbf{e}_{4}\}$ be an orthonormal frame of $\mathbb{R}^{4}$ such that at any $u\in U$,
$\{\textbf{e}_{1}(u),\textbf{e}_{2}(u)\}$ is a basis for $T_{p}S$ and $\{\textbf{e}_{3}(u),\textbf{e}_{4}(u)\}$ is a basis for
$N_{p}S$ at $p=f(u)$.
The second fundamental form of $S$ at $p$ is the vector valued quadratic form
$II_{p}:T_{p}S\rightarrow N_{p}S$ given by
$$II_{p}(\textbf{w})=(l_{1}w_{1}^{2}+2m_{1}w_{1}w_{2}+n_{1}w_{2}^{2})\textbf{e}_{3}+(l_{2}w_{1}^{2}+2m_{2}w_{1}w_{2}+n_{2}w_{2}^{2})\textbf{e}_{4},$$
where $l_{i}=\langle f_{xx},\textbf{e}_{i+2}\rangle,\ m_{i}=\langle f_{xy},\textbf{e}_{i+2}\rangle$
and $n_{i}=\langle f_{yy},\textbf{e}_{i+2}\rangle$ for $i=1,2$ are
called the coefficients of the second fundamental form with respect to the
frame above and $\textbf{w}=w_{1}\textbf{e}_{1}+w_{2}\textbf{e}_{2}\in T_{p}S$. The matrix of the second fundamental
form with respect to the orthonormal frame above is given by
$$
\alpha=\left(
         \begin{array}{ccc}
           l_{1} & m_{1} & n_{1} \\
           l_{2} & m_{2} & n_{2} \\
         \end{array}
       \right).
$$

The \emph{resultant} of the quadratic forms is a scalar invariant of the surface defined by Little in \cite{Little}, given by
$$\delta=\frac{1}{4}(4(l_{1}m_{2}-m_{1}n_{2})(m_{1}n_{2}-n_{1}m_{2})-(l_{1}n_{2}-n_{1}l_{2})^{2}).$$
A point $p\in S$ is hyperbolic or elliptic according to whether $\delta(p)$ is negative
or positive, respectively. If $\delta(p)$ is equal to zero, the point is parabolic or an inflection, according to the rank of $\alpha$: $p$ is parabolic if the rank is $2$ and an inflection if it is less than $2$.

A non zero tangent direction $\textbf{u}\in T_{p}S$ is an \emph{asymptotic direction} if there is a non zero vector $v\in N_{p}M$ such that
$$\langle II(\textbf{u},\textbf{w}),v\rangle=0,\ \ \forall\ \textbf{w}\in T_{p}S.$$
Furthermore, $v\in N_{p}S$ is a \emph{binormal direction}.

One can obtain a lot of geometrical information of a regular surface $S\subset\mathbb{R}^{4}$, by studying the generic contact of the surface with hyperplanes. Such contact is measured
by the singularities of the height function of $S$. Let $f:U\rightarrow\mathbb{R}^{4}$ be a local parametrisation of $S$. The family of height functions is given by
$$H:U\times \mathbb{S}^{3}\rightarrow\mathbb{R},\ \  H(u,v)=\langle f(u),v\rangle.$$
Fixing $v\in\mathbb{S}^{3}$, the height function $h_{v}$ of $S$ is given by $h_{v}(u)=H(u,v)$ and has the following property: a normal direction $v$ at $p=f(u)\in S$ is a binormal direction if and only if any tangent direction lying in the kernel of the Hessian of $h_{v}$ at $u$ is an asymptotic direction of $S$ at $p$.

\begin{definition}
The \emph{canal hypersurface} of the surface $S\subset\mathbb{R}^{4}$ is the $3$-manifold
$$CS(\varepsilon)=\{p+\varepsilon v\in\mathbb{R}^{4}|\ p\in S\ \mbox{and}\ v\in (N_{p}S)_{1}\}$$
where $(N_{p}S)_{1}$ denotes the unit sphere in $N_{p}S$ and $\varepsilon$ is a small positive real number.
\end{definition}

It is possible to consider $(N_{p}S)_{1}$ as a subset of $\mathbb{S}^{3}$ and as a consequence, identify $(p,v)$ and $p+\varepsilon v$.

We shall denote the family of height functions on $CS(\varepsilon)$ by $\bar{H}:CS(\varepsilon)\times\mathbb{S}^{3}\rightarrow\mathbb{R}$. So, given $w\in\mathbb{S}^{3}$, the height function of $CS(\varepsilon)$ along $w$ is given by $\bar{h}_{w}:CS(\varepsilon)\rightarrow\mathbb{R}$, where $\bar{h}_{w}(p,v)=\bar{H}((p,v),w)$. Given a point $p\in M$, it is a singular point of $h_{v}$ if and only if $(p,v)\in CS(\varepsilon)$ is a singular point of $\bar{h}_{v}$.

The \emph{Gauss map} of the canal hypersurface $CS(\varepsilon)$, $G:CS(\varepsilon)\rightarrow\mathbb{S}^{3}$, is given by $G(p,v)=v$.
%The linear transformation $W_{(p,v)}=-dG_{(p,v)}:T_{(p,v)}CS(\varepsilon)\rightarrow T_{(p,v)}CS(\varepsilon)$ is called the shape operator or the \emph{Weingarten map} of $CS(\varepsilon)$ at $(p,v)$. This transformations is self-adjoint and its eigenvalues $\kappa_{i}(p,v)$, $i=1,2,3$, are called the principal curvature of of $CS(\varepsilon)$ at $(p,v)$. The \emph{Gauss-kronecker curvature} of the canal at $(p,v)$ is defined to be
%$$K_{c}(p,v)=\det(W_{p})=\prod_{i=1}^{3}\kappa_{i}(p,v).$$
Let $K_{c}:CS(\varepsilon)\rightarrow\mathbb{R}$ be the Gauss-Kronecker curvature function of $CS(\varepsilon)$. Then, the singular set of $G$ is the parabolic set
$$K^{-1}_{c}(0)=\{p+\varepsilon v\in CS(\varepsilon)|\ h_{v}\ \mbox{has a degenerate singularity at p}\}$$
of $CS(\varepsilon)$, which is a regular surface except at a finite number of singular points corresponding to the $D_{4}^{\pm}$-singularities oh $\bar{h}_{v}$. The regular part has regular curves corresponding to the cuspidal edge points and those curves may have especial isolated points which are the swallowtail points.

%When $M\subset\mathbb{R}^{4}$ is a height function generic surface, the Gauss map $G$ is a Lagrangian stable map. The Thom-Boardman symbols can be use to distinguish the singular points of the Gauss map. Hence, we can write $K_{c}^{-1}(0)=S_{1}(G)\cup S_{2}(G)$, where $S_{1}(G)$ denotes the subset of points of corank $1$ and $S_{2}(G)$ the one of corank $2$ of $G$. The set $S_{2}(G)$ is a smooth surface and can be also decomposed by $S_{1,0}(G)\cup S_{1,1,0}(G)\cup S_{1,1,1,0}(G)$. The points in $S_{1,0}(G)$ are the regular points of the restriction of $G$ to $S_{1}(G)$: the fold points of $G$. On the other hand, the set of singular points of $G|_{S_{1}(G)}$ is the smooth curve $S_{1,1}(G)=S_{1,1,0}(G)\cup S_{1,1,1,0}(G)$. In this case, $S_{1,1,0}(G)$ is the set of regular points of $G$ when restricted to $S_{1,1}(G)$ and $S_{1,1,1,0}$ is the set of singular ones: they are the cuspidal edges points and the swallowtail points of $G$, respectively.

%The projection of the canal hypersurface $CS(\varepsilon)$ to $S$, $\zeta:CS(\varepsilon)\rightarrow S$, is given by $\zeta(p,v)=p$. The image of the parabolic set $K_{c}^{-1}(0)$ by $\zeta$ is the set $\delta\leqslant0$. Let $\bar{\zeta}$ be the restriction of $\zeta$ to the regular surface $S_{1}(G)=K_{c}^{-1}(0)\backslash S_{2}(G)$.

One can characterise geometrically the degenerate singularities of generic height functions. Denote by $\gamma$ the normal section of the surface $S$ tangent to the asymptotic direction $\theta$ at $p$ associated to the binormal direction $v$.

\begin{teo}\cite{Livro}\label{teo-h-hiper}
Let $p$ be a hyperbolic point on a height function generic surface $M\subset\mathbb{R}^{4}$. Then,
\begin{itemize}
    \item[(i)] $p$ is an $A_{2}$ singularity of $h_{v}$ if and only if $\gamma$ has a non vanishing normal torsion at $p$.
    \item[(ii)] $p$ is an $A_{3}$ singularity of $h_{v}$ if and only if $\gamma$ has a vanishing torsion at $p$ and the direction $\theta$ is transversal to the curve of cuspidal edges points of the Gauss map.
\end{itemize}
\end{teo}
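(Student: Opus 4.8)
The plan is to reduce the classification of the singularity of $h_{v}$ to the computation of a few Taylor coefficients of the height function restricted to the asymptotic direction, and then to read these coefficients off as the torsion data of the normal section $\gamma$. First I would put $S$ in Monge form at $p$: choose the orthonormal frame so that $T_{p}S=\langle\textbf{e}_{1},\textbf{e}_{2}\rangle$, $N_{p}S=\langle\textbf{e}_{3},\textbf{e}_{4}\rangle$, and write a local parametrisation $f(x,y)=(x,y,f_{3}(x,y),f_{4}(x,y))$ with $f_{3},f_{4}$ of order $\geqslant 2$. For a normal direction $v\in N_{p}S$ the point $p$ is critical for $h_{v}=\langle f,v\rangle$, and its Hessian is precisely the binormal second fundamental form $\textbf{w}\mapsto\langle II_{p}(\textbf{w}),v\rangle$. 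Since $p$ is hyperbolic and $v$ is a binormal direction, this Hessian has rank exactly $1$, with kernel the asymptotic direction $\theta$; after a linear change I may assume $\theta=\partial_{y}$ and that the quadratic part of $h_{v}$ equals $\pm x^{2}$.

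Next I would apply the splitting lemma to write $h_{v}\sim_{\mathcal{R}}\pm x^{2}+g(y)$, where the $k$-jet of $g$ records the intrinsic $y$-data of $h_{v}$. The cubic coefficient of $g$ is unchanged by the splitting and equals $\tfrac{1}{6}\langle f_{yyy}(0),v\rangle$; moreover, because $\theta=\partial_{y}$ lies in the hyperplane $\langle\theta\rangle\oplus N_{p}S$, the normal section satisfies $\gamma(y)=f(0,y)$, so this coefficient is $\tfrac{1}{6}\langle\gamma'''(0),v\rangle$. Expanding $\gamma'''(0)$ in the Frenet frame and using that $\gamma'(0)=\theta$ and the curvature vector $\gamma''(0)=II_{p}(\theta)$ are both orthogonal to $v$ (the first because $v$ is normal, the second because $v$ is binormal), only the binormal term survives, giving $\langle\gamma'''(0),v\rangle=\kappa\tau\langle B,v\rangle$. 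This quantity is, up to the nonzero factors $\kappa=|II_{p}(\theta)|$ and $\langle B,v\rangle$, the normal torsion of $\gamma$ at $p$; hence it is nonzero exactly when the (normal) torsion does not vanish, which is the $A_{2}$ condition and proves part (i).

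For part (ii) I would start from the case where the cubic coefficient vanishes, i.e. the torsion of $\gamma$ is zero, so that $g$ has order $\geqslant 4$ and $h_{v}$ is at least $A_{3}$. To separate $A_{3}$ from $A_{4}$ (and worse) I would pass to the canal hypersurface $CS(\varepsilon)$ and its Gauss map $G$, using the identification recalled above between singular points of $h_{v}$ and of $\bar{h}_{v}$. The singular set of $G$ is the parabolic set $K^{-1}_{c}(0)$, on which the $A_{3}$ points form the curve of cuspidal edges of $G$; a point is exactly $A_{3}$ precisely when the genericity of the height family forces the quartic coefficient of $g$ to be nonvanishing. I would then translate this last condition into the statement that the asymptotic direction $\theta$ is transversal to the curve of cuspidal edges of the Gauss map, completing (ii).

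The main obstacle I anticipate is the bookkeeping in part (ii): showing that the vanishing of the cubic term (torsion) together with the genericity of the height family makes the quartic coefficient of $g$ controlled by the contact of $\theta$ with the cuspidal edge curve, and that transversality is exactly the nonvanishing of that coefficient. This requires carefully relating the $4$-jet of $g$ (after the splitting-lemma correction, which does affect order $4$) to the geometry of the Gauss map of $CS(\varepsilon)$, rather than the direct Frenet computation that suffices for part (i).
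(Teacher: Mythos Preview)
The paper does not contain a proof of this theorem: it is quoted verbatim from the reference \cite{Livro} and used as a black box in the proof of the later proposition on the singularities of $h_{v}$ for the $I_{1}$ normal form. There is therefore nothing in the paper to compare your proposal against.

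That said, a brief comment on the proposal itself. Your treatment of part (i) is the standard and correct route: Monge form, splitting lemma, and the identification of the cubic coefficient of $g$ with $\kappa\tau\langle B,v\rangle$ via the Frenet expansion of $\gamma'''(0)$. One small point to tighten: you should justify that $\langle B,v\rangle\neq 0$ (which follows because $v$, $II_{p}(\theta)$ and $B$ all lie in the normal plane $N_{p}S$, and $v\perp II_{p}(\theta)$ while $B\perp II_{p}(\theta)$, so $v$ and $B$ are parallel). For part (ii) you correctly identify the obstacle and essentially stop short of a proof: the passage from ``quartic coefficient of $g$ is nonzero'' to ``$\theta$ is transversal to the cuspidal-edge curve of the Gauss map of $CS(\varepsilon)$'' is the whole content of the statement, and invoking genericity of the height family does not by itself produce that equivalence. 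What is actually needed is to write down the parabolic set $K_{c}^{-1}(0)\subset CS(\varepsilon)$ explicitly in the Monge coordinates (it is cut out by the vanishing of the Hessian determinant of $\bar h_{v}$), locate the cuspidal-edge stratum inside it, compute its tangent direction at $(p,v)$, and compare with $\theta$; the transversality condition then falls out as precisely the nonvanishing of the corrected quartic term. Until that computation is done, part (ii) remains a restatement of the claim rather than a proof.
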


A characterisation of the singularities of the height functions at a parabolic point can also be done.

\begin{teo}\cite{Livro}\label{teo-h-parabolic}
Let $M$ be a height function generic surface in $\mathbb{R}^{4}$ and $p\in M$. Suppose $p$ is a parabolic point, but not an inflection point. Then,
\begin{itemize}
    \item[(i)] p is an $A_{2}$-singularity of $h_{v}$ if and only if $\theta$ is transversal to the parabolic curve $\delta$.
    \item[(ii)] p is an $A_{3}$-singularity of $h_{v}$ if and only if $\theta$ is tangent to the parabolic curve $\delta$ with first order contact.
\end{itemize}
\end{teo}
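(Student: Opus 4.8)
The plan is to work in an adapted Monge chart, read off the type of $h_v$ from the Taylor coefficients of the parametrisation, and then match these against the derivatives of the resultant $\delta$ along the asymptotic direction. First I would fix the orthonormal frame so that the unique binormal at the parabolic point $p$ is $\textbf{e}_4$ and rotate the source coordinates so that the repeated asymptotic direction is $\theta=\partial_x$. Writing $M$ locally as $f(x,y)=(x,y,f_3(x,y),f_4(x,y))$, with $f_3,f_4$ vanishing to second order, the height function along $v=\textbf{e}_4=(0,0,0,1)$ is then simply $h_v=f_4$. The binormal condition makes $\mathrm{Hess}\,h_v(0)$ degenerate with kernel $\theta$, which gives $l_2=m_2=0$; requiring $p$ to be parabolic but not an inflection forces the binary form defining the asymptotic directions, here $n_2 v(l_1u+m_1v)$, to have $v=0$ as a double root, hence $l_1=0$, while $m_1\neq0$ and $n_2\neq0$.

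Next I would classify the singularity of $h_v=f_4$. Since $\mathrm{Hess}\,h_v(0)=\mathrm{diag}(0,n_2)$ has rank one with kernel along $\theta$, the splitting lemma reduces $h_v$ to $\pm\tilde y^2+g(x)$, where $g(x)=f_4(x,\phi(x))$ and $y=\phi(x)=O(x^2)$ is the solution of $\partial_y f_4=0$. A direct expansion gives $g(x)=\tfrac{1}{6}c_{30}x^3+\cdots$ with $c_{30}=\partial^3_x f_4(0)$, so $p$ is an $A_2$ singularity iff $c_{30}\neq0$, and an $A_3$ singularity iff $c_{30}=0$ and the reduced quartic coefficient $\tilde c_{40}$ of $g$ is nonzero.

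For part (i) I would differentiate the resultant $\delta=\tfrac{1}{4}(4AC-B^2)$, with $A=l_1m_2-m_1l_2$, $B=l_1n_2-n_1l_2$, $C=m_1n_2-n_1m_2$. At the origin $A=B=0$ and $C=m_1n_2\neq0$, and since $f_{xx}(0)=(0,0,l_1,l_2)=0$ the moving-frame corrections drop out, so that $\partial_x\delta(0)=A_x(0)\,m_1n_2=-m_1^2n_2\,c_{30}$. As $m_1,n_2\neq0$, the asymptotic line is transversal to the parabolic curve $\{\delta=0\}$, i.e.\ $\partial_x\delta(0)\neq0$, exactly when $c_{30}\neq0$; this is precisely the $A_2$ condition, proving (i).

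The hard part will be part (ii). When $c_{30}=0$ we get $\partial_x\delta(0)=0$, so $\theta$ is tangent to the parabolic curve, and ``first order contact'' should mean that the asymptotic line meets $\{\delta=0\}$ with contact of order exactly two, i.e.\ $\partial^2_x\delta(0)\neq0$. I would compute $\partial^2_x\delta(0)$ and show it equals a nonzero multiple of $\tilde c_{40}$, so that first order contact is equivalent to the $A_3$ (rather than $A_4$) condition. The delicate point is the bookkeeping here: now $f_{xy}(0)=m_1\textbf{e}_3\neq0$, so the normal connection coefficient $\langle\textbf{e}_3,(\textbf{e}_4)_x\rangle$ enters through $\partial_x m_2$, and one must also carry the correction coming from substituting $y=\phi(x)$ when forming $\tilde c_{40}$; checking that all of these contributions collapse into the single proportionality $\partial^2_x\delta(0)\propto\tilde c_{40}$ is where the genuine work lies.
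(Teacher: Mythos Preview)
The paper does not contain a proof of this theorem: it is stated in Section~\ref{regsurfaces} as a background result about regular surfaces in $\mathbb{R}^{4}$, cited from \cite{Livro}, and no argument is given. There is therefore nothing in the paper to compare your attempt against.

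On its own merits, your plan is the standard one and part~(i) is essentially complete. The adapted Monge chart with $l_{1}=l_{2}=m_{2}=0$ and $m_{1}n_{2}\neq0$ is the natural normalisation at a parabolic, non-inflection point; the splitting-lemma reduction giving $A_{2}\Leftrightarrow c_{30}\neq0$ is correct; and your computation $\partial_{x}\delta(0)=-m_{1}^{2}n_{2}\,c_{30}$ goes through precisely because the moving-frame correction in $(l_{2})_{x}$ vanishes thanks to $f_{xx}(0)=0$. For part~(ii), however, you have only described what must be checked---that $\partial_{x}^{2}\delta(0)$ is a nonzero multiple of the reduced quartic coefficient $\tilde c_{40}$---without carrying it out. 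As you yourself note, once $c_{30}=0$ the correction $\langle f_{xy},(e_{4})_{x}\rangle$ in $(m_{2})_{x}$ no longer drops for free, and the substitution $y=\phi(x)$ contributes to $\tilde c_{40}$ at the same order; verifying that these pieces assemble into the claimed proportionality is honest bookkeeping rather than a missing idea, but until it is written out, part~(ii) remains a plan and not a proof.
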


%%%%%%%%%%%%%%%%%%%%%%%%%%%%%%%%%%%%%%%%%%%%%%%%%%%%%%%%%%%%%%%%%%%%%%%%%%%%%%%%%%%%%%%%%%%%%%%%%%%%%%%%%%%%%%%%%%%%%%%%%%%%%%%%%%%%%%%
%%%%%%%%%%%%%%%%%%%%%%%%%%%%%%%%%%%%%%%%%%%%%%%%%%%%%%%%%%%%%%%%%%%%%%%%%%%%%%%%%%%%%%%%%%%%%%%%%%%%%%%%%%%%%%%%%%%%%%%%%%%%%%%%%%%%%%%

\section{Corank $1$ surfaces in $\mathbb{R}^{4}$}\label{section-notation}

\subsection{The curvature parabola} Here we present a brief study of the differential geometry of corank $1$ surfaces in $\mathbb{R}^{4}$ which can be found in \cite{Benedini/Sinha/Ruas}.
Let $M$ be a corank $1$ surface in $\mathbb{R}^{4}$ at $p$. We take $M$ as the image of a smooth map $g:\tilde{M}\rightarrow \mathbb{R}^{4}$, where $\tilde{M}$ is a smooth regular surface and $q\in\tilde{M}$ is a corank $1$ point of $g$ such that $g(q)=p$. Also, we consider $\phi:U\rightarrow\mathbb{R}^{2}$ a local coordinate system defined in an open neighbourhood $U$ of $q$ at $\tilde{M}$, and by doing this we may consider a local parametrisation $f=g\circ\phi^{-1}$ of $M$ at $p$ (see the diagram below).

$$
\xymatrix{
\mathbb{R}^{2}\ar@/_0.7cm/[rr]^-{f} & U\subset\tilde{M}\ar[r]^-{g}\ar[l]_-{\phi} & M\subset\mathbb{R}^{4}
}
$$

The \emph{tangent line} of $M$ at $p$, $T_{p}M$, is given by $\mbox{Im}\ dg_{q}$, where $dg_{q}:T_{q}\tilde{M}\rightarrow T_{p}\mathbb{R}^{4}$ is the differential map of $g$ at $q$. Hence, the \emph{normal hyperplane} of $M$ at $p$, $N_{p}M$, is the subspace satisfying $T_{p}M\oplus N_{p}M=T_{p}\mathbb{R}^{4}$.

Consider the orthogonal projection $\perp:T_{p}\mathbb{R}^{4}\rightarrow N_{p}M$, $w\mapsto w^{\perp}$.
The \emph{first fundamental form} of $M$ at $p$, $I:T_{q}\tilde{M}\times T_{q}\tilde{M}\rightarrow \mathbb{R}$ is given by
$$I(\textbf{u},\textbf{v})=\langle dg_{q}(\textbf{u}),dg_{q}(\textbf{v})\rangle,\ \ \ \ \forall\ \textbf{u},\textbf{v}\in T_{q}\tilde{M}.$$
Since the map $g$ has corank $1$ at $q\in T_{q}\tilde{M}$, the first fundamental form is not a Riemannian metric on $T_{q}\tilde{M}$, but a pseudometric. Considering the local parametrisation of $M$ at $p$, $f=g\circ\phi^{-1}$ and the basis $\{\partial_{x},\partial_{y}\}$ of $T_{q}\tilde{M}$, the coefficients of the first fundamental form with respect to $\phi$ are:
$$\begin{array}{c}
     E(q)=I(\partial_{x},\partial_{x})=\langle f_{x},f_{x}\rangle(\phi(q)),\   F(q)=I(\partial_{x},\partial_{y})=\langle f_{x},f_{y}\rangle(\phi(q)), \\
      G(q)=I(\partial_{y},\partial_{y})=\langle f_{y},f_{y}\rangle(\phi(q)).
\end{array}$$
Taking $\textbf{u}=\alpha\partial_{x}+\beta\partial_{y}=(\alpha,\beta)\in T_{q}\tilde{M}$, we write $I(\textbf{u},\textbf{u})=\alpha^{2}E(q)+2\alpha\beta F(q)+\beta^{2}G(q)$.

With the same conditions as above, the \emph{second fundamental form} of $M$ at $p$, $II:T_{q}\tilde{M}\times T_{q}\tilde{M}\rightarrow N_{p}M$ in the basis $\{\partial_{x},\partial_{y}\}$ of $T_{q}\tilde{M}$ is given by
$$
\begin{array}{c}
     II(\partial_{x},\partial_{x})=f_{xx}^{\perp}(\phi(q)),\  II(\partial_{x},\partial_{y})=f_{xy}^{\perp}(\phi(q)),\
      II(\partial_{y},\partial_{y})=f_{yy}^{\perp}(\phi(q))
\end{array}
$$
and we extend it to the whole space in a unique way as a symmetric bilinear map. It is possible to show that the second fundamental form does not depend on the choice of local coordinates on $\tilde{M}$.

For each normal vector $\nu\in N_{p}M$, the \emph{second fundamental form along $\nu$}, $II_{\nu}:T_{q}\tilde{M}\times T_{q}\tilde{M}\rightarrow\mathbb{R}$ is given by $II_{\nu}(\textbf{u},\textbf{v})=\langle II(\textbf{u},\textbf{v}),\nu\rangle$, for all $\textbf{u},\textbf{v}\in T_{q}\tilde{M}$. The coefficients of $II_{\nu}$ with respect to the basis $\{\partial_{x},\partial_{y}\}$ of $T_{q}\tilde{M}$ are
$$
\begin{array}{cc}
     l_{\nu}(q)=\langle f_{xx}^{\perp},\nu\rangle(\phi(q)),\ m_{\nu}(q)=\langle f_{xy}^{\perp},\nu\rangle(\phi(q)),  \\
     n_{\nu}(q)=\langle f_{yy}^{\perp},\nu\rangle(\phi(q)).
\end{array}
$$

Fixing an orthonormal frame $\{\nu_{1},\nu_{2},\nu_{3}\}$ of $N_{p}M$,
$$
\begin{array}{cl}\label{eq.2ff}
II(\textbf{u},\textbf{u}) & =II_{\nu_{1}}(\textbf{u},\textbf{u})\nu_{1}+II_{\nu_{2}}(\textbf{u},\textbf{u})\nu_{2}+II_{\nu_{3}}(\textbf{u},\textbf{u})\nu_{3} \\
        & =\displaystyle{\sum_{i=1}^{3}(\alpha^{2}l_{\nu_{i}}(q)+2\alpha\beta m_{\nu_{i}}(q)+\beta^{2}n_{\nu_{i}}(q))\nu_{i}},
\end{array}
$$
Moreover, the second fundamental form is represented by the matrix of coefficients
$$
\left(
  \begin{array}{ccc}
    l_{\nu_{1}} & m_{\nu_{1}} & n_{\nu_{1}} \\
    l_{\nu_{2}} & m_{\nu_{2}} & n_{\nu_{2}} \\
    l_{\nu_{3}} & m_{\nu_{3}} & n_{\nu_{3}} \\
  \end{array}
\right).
$$

\begin{definition}\cite{Benedini/Sinha/Ruas}\label{curvatureparabola}
Let $C_{q}\subset T_{q}\tilde{M}$ be the subset of unit tangent vectors and let $\eta_{q}:C_{q}\rightarrow N_{p}M$ be the map given by $\eta_{q}(\textbf{u})=II(\textbf{u},\textbf{u})$. The \emph{curvature parabola} of $M$ at $p$, denoted by $\Delta_{p}$, is the image of $\eta_{q}$, that is, $\eta_{q}(C_{q})$.
\end{definition}

The curvature parabola is a plane curve whose trace lies in the normal hyperplane of the surface. Also, this curve may degenerate into a half-line, a line or even a point.

\begin{ex}\label{ex}
Consider $\tilde{M}=\mathbb{R}^{2}$ and the singular surface $M$ locally parametrised by the $I_{1}$-singularity $f(x,y)=(x,xy,y^{2},y^{3})$. Taking coordinates $(X,Y,Z,W)$ in $\mathbb{R}^{4}$, $q=(0,0)$ and $p=(0,0,0,0)$, the tangent line $T_{p}M$ is the $X$-axis and $N_{p}M$ is the $YZW$-hyperplane. The coefficients of the first fundamental form are given by $E(q)=1$ and $F(q)=G(q)=0$. Hence, if $\textbf{u}=(\alpha,\beta)\in T_{q}\mathbb{R}^{2}$, $I(\textbf{u},\textbf{u})=\alpha^{2}$ and $C_{q}=\{(\pm1,y):y\in\mathbb{R}\}$. The matrix of coefficients of the second fundamental form is
$$\left(
\begin{array}{ccc}
    0 & 1 & 0  \\
    0 & 0 & 2  \\
    0 & 0 & 0  \\
\end{array}
\right)
$$
when we consider the orthonormal frame $\{\textbf{e}_{1},\textbf{e}_{2},\textbf{e}_{3},\textbf{e}_{4}\}$. Therefore, for $\textbf{u}=(\alpha,\beta)$, $II(\textbf{u},\textbf{u})=(0,2\alpha\beta,2\beta^{2},0)$ and
the curvature parabola $\Delta_{p}$ is a non-degenerate parabola which can be parametrised by $\eta(y)=(0,2y,2y^{2},0)$.
\end{ex}

%%%%%%%%%%%%%%%%%%%%%%%%%%%%%%%%%%%%%%%%%%%%%%%%%%%%%%%%%%%%%%%%%%%%%%%%%%%%%%%%%%%%%%%%%%%%%%%%%%%%%%%%%%%%%%%%%%
%%%%%%%%%%%%%%%%%%%%%%%%%%%%%%%%%%%%%%%%%%%%%%%%%%%%%%%%%%%%%%%%%%%%%%%%%%%%%%%%%%%%%%%%%%%%%%%%%%%%%%%%%%%%%%%%%%

\subsection{Second order properties}
Given a regular surface $N\subset\mathbb{R}^{5}$, we
consider the corank $1$ surface $M$ at $p$ obtained by the
projection of $N$ in a tangent direction, via the map
$\xi:N\subset\mathbb{R}^{5}\rightarrow M$. The regular surface
$N\subset\mathbb{R}^{5}$ can be taken, locally, as the image of an
immersion $i:\tilde{M}\rightarrow N\subset\mathbb{R}^{5}$, where
$\tilde{M}$ is the regular surface from the construction done
before.

The points of $N$ can be
characterized according to the rank of its fundamental form at that
point. Inspired by this classification, we have the following:

\begin{definition}
Given a corank $1$ surface $M\subset\mathbb{R}^{4}$, we define the subset
$$M_{i}=\{p\in M: p\ \mbox{is singular and}\ rank(II_{p})=i\},\ i=0,1,2,3.$$
\end{definition}

\begin{definition}
The minimal affine space which contains the curvature parabola is denoted by $\mathcal{A}ff_{p}$. The plane denoted by $E_{p}$ is the vector space: parallel to $\mathcal{A}ff_{p}$ when $\Delta_{p}$ is a non degenerate parabola, the plane through $p$ that contains $\mathcal{A}ff_{p}$ when $\Delta_{p}$ is a non radial half-line or a non radial line and any plane through $p$ that contains $\mathcal{A}ff_{p}$ when $\Delta_{p}$ is a radial half-line, a radial line or a point.
\end{definition}

%\begin{figure}[h!]
%\begin{center}
%\includegraphics[scale=0.45]{E_p.png}
%\caption{The planes $\mathcal{A}ff_{p}$ and $E_{p}$}
%\label{planes}
%\end{center}
%\end{figure}

Let $S\subset\mathbb{R}^{4}$ be the regular surface locally obtained by projecting $N\subset\mathbb{R}^{5}$ via the map $\pi$ into the four space given by $T_{\xi^{-1}(p)}N\oplus \xi^{-1}(E_{p})$ (see the following diagram).

$$
\xymatrix{
 &  & N\subset\mathbb{R}^{5}\ar[rd]^-{\pi}\ar[d]^-{\xi} & \\
 \mathbb{R}^{2}\ar@/_0.7cm/[rr]^-{f}& \tilde{M}\ar[l]_-{\phi}\ar[r]^-{g}\ar[ru]^-{i}& M\subset\mathbb{R}^{4}& S\subset\mathbb{R}^{4}
}
$$

%\begin{prop}\cite{Benedini/Sinha/Ruas}
%Suppose $p\in M_{i}$, $i=0,1$. The second order geometry of the surface $S\subset\mathbb{R}^{4}$ obtained by the previous construction does not depend on the choice of the plane $E_{p}\subset N_{p}M$.
%\end{prop}

Using the previous construction, one can relate the corank $1$ singular surface $M\subset\mathbb{R}^{4}$ and the regular surface $S\subset\mathbb{R}^{4}$.

\begin{definition}
A non zero direction $\textbf{u}\in T_{q}\tilde{M}$ is called \emph{asymptotic} if there is a non zero vector $\nu\in E_{p}$ such that
$$II_{\nu}(\textbf{u},\textbf{v})=\langle II(\textbf{u},\textbf{v}),\nu\rangle=0\ \ \forall\ \textbf{v}\in T_{q}\tilde{M}.$$
Moreover, in such case, we say that $\nu$ is a \emph{binormal direction}.
\end{definition}

The normal vectors $\nu\in N_{p}M$ satisfying the condition
$II_{\nu}(\textbf{u},\textbf{v})=0$ are called \emph{degenerate
directions}, but only those in $E_p$ are binormal directions.
When $p\in M_{1}\cup M_{0}$, the choice of $E_{p}$ does not change the number of binormal directions. Furthermore, all directions $\textbf{u}\in T_{q}\tilde{M}$ are asymptotic.

\begin{definition}
Given a binormal direction $\nu\in E_{p}$, the hyperplane through $p$ and orthogonal to $\nu$ is called an \emph{osculating hyperplane} to $M$ at $p$.
\end{definition}

\begin{definition}\label{pointstypes}
Given a surface $M\subset\mathbb{R}^{4}$ with corank $1$ singularity at $p\in M$. The point $p$ is called:
\begin{itemize}
    \item[(i)] \emph{elliptic} if there are no asymptotic directions at $p$;
    \item[(ii)] \emph{hyperbolic} if there are two asymptotic directions at $p$;
    \item[(iii)] \emph{parabolic} if there is one asymptotic direction at $p$;
    \item[(iv)] \emph{inflection} if there are an infinite number of asymptotic directions at $p$.
\end{itemize}
\end{definition}

The next result compares the geometry of a corank $1$ surface in $\mathbb{R}^{4}$ with the geometry of the associated regular surface $S\subset\mathbb{R}^{4}$ obtained.

\begin{teo}\label{teorelation}\cite{Benedini/Sinha/Ruas}
Let $M\subset\mathbb{R}^{4}$ be a surface with corank $1$
singularity at $p\in M$ and $S\subset\mathbb{R}^{4}$ the regular
surface associated to $M$.
\begin{itemize}
    \item[(i)] A direction $\textbf{u}\in T_{q}\tilde{M}$ is an asymptotic direction of $M$ if and only if it is also an asymptotic direction of the associated regular surface $S\subset\mathbb{R}^{4}$;
   \item[(ii)] A direction $\nu\in N_{p}M$ is a binormal direction of $M$ if and only if $\pi\circ
   \xi^{-1}(\nu)\in N_{\pi\circ\xi^{-1}(p)}S$ is a binormal direction of $S$.
    \item[(iii)] The point $p$ is an elliptic/hyperbolic/parabolic/inflection point if and only if $\pi\circ\xi^{-1}(p)\in S$ is an elliptic/hyperbolic/parabolic/inflection point, respectively.
\end{itemize}
\end{teo}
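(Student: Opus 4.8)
The plan is to reduce all three parts to a single linear-algebraic identity: under the identifications provided by the construction, the second fundamental form of $M$ restricted to the plane $E_{p}$ agrees with the second fundamental form of the regular surface $S$. Once this is established, parts (i), (ii) and (iii) follow at once, because asymptotic directions, binormal directions, and the elliptic/hyperbolic/parabolic/inflection trichotomy are all defined solely through the degeneracy of these bilinear forms.

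First I would fix adapted coordinates. Writing $N\subset\mathbb{R}^{5}$ in Monge form at $\xi^{-1}(p)$ as $i(x,y)=(x,y,f_{1},f_{2},f_{3})$ with $j^{1}f_{k}=0$, the tangent plane $T_{\xi^{-1}(p)}N$ is the $x_{1}x_{2}$-plane and the normal space is spanned by the last three coordinate directions. Taking the projection direction to be tangent, say along $\partial_{x_{2}}$, the map $\xi$ is the orthogonal projection forgetting $x_{2}$, so that $M$ is parametrised by $f(x,y)=(x,f_{1},f_{2},f_{3})$, which has corank $1$ at the origin. A direct computation then shows that the coefficient matrix of $II$ with respect to the frame given by the last three coordinate directions has as its $k$-th row the Hessian entries $(\partial_{xx}f_{k},\partial_{xy}f_{k},\partial_{yy}f_{k})$ at the origin, exactly as in Example \ref{ex}.

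Next I would compute the second fundamental form of $S$. Since $\pi$ is the orthogonal projection onto $T_{\xi^{-1}(p)}N\oplus\xi^{-1}(E_{p})$ and this $4$-space contains the whole tangent plane $T_{\xi^{-1}(p)}N$, the composite $\pi\circ i$ is an immersion with $T_{p'}S=T_{\xi^{-1}(p)}N$, where $p'=\pi\circ\xi^{-1}(p)$; consequently $N_{p'}S=\xi^{-1}(E_{p})$. In these coordinates $S$ is parametrised by $h(x,y)=(x,y,P(f_{1},f_{2},f_{3}))$, where $P$ is the orthogonal projection onto $\xi^{-1}(E_{p})$. Because the normal components of the Hessians of $h$ are the $P$-projections of the second derivatives of $f$, and $\xi$ restricts to a linear isometry of $\xi^{-1}(E_{p})$ onto $E_{p}$ carrying $\pi\circ\xi^{-1}(\nu)$ to $\nu$, I obtain, for every $\nu\in E_{p}$ and all $\textbf{u},\textbf{v}\in T_{q}\tilde{M}$, the identity $\langle II^{S}(\textbf{u},\textbf{v}),\pi\circ\xi^{-1}(\nu)\rangle=II_{\nu}(\textbf{u},\textbf{v})$.

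From this identity the theorem is immediate. A direction $\textbf{u}$ is asymptotic for $M$ precisely when some nonzero $\nu\in E_{p}$ satisfies $II_{\nu}(\textbf{u},\textbf{v})=0$ for all $\textbf{v}$; by the identity this holds if and only if the nonzero vector $\pi\circ\xi^{-1}(\nu)\in N_{p'}S$ satisfies $\langle II^{S}(\textbf{u},\textbf{v}),\pi\circ\xi^{-1}(\nu)\rangle=0$ for all $\textbf{v}$, i.e. if and only if $\textbf{u}$ is asymptotic for $S$, proving (i); the vectors $\nu$ and $\pi\circ\xi^{-1}(\nu)$ achieving the degeneracy are by definition the respective binormals, proving (ii); and since the number of asymptotic directions (zero, one, two, or infinitely many) depends only on these forms, the two point types coincide, proving (iii). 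The main obstacle is the careful bookkeeping of the two projections: one must verify rigorously that $N_{p'}S=\xi^{-1}(E_{p})$ and that $\xi$ identifies $E_{p}$ with it isometrically, and one must ensure the argument is uniform over the degenerate shapes of $\Delta_{p}$ (half-line, line, point), where $E_{p}$ is merely chosen rather than canonical; here the earlier remark that on $M_{1}\cup M_{0}$ the number of binormals is independent of the choice of $E_{p}$ is what secures consistency.
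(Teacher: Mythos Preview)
The paper does not contain a proof of this theorem: it is quoted verbatim from \cite{Benedini/Sinha/Ruas} and left unproved here, so there is no ``paper's own proof'' to compare against. Your task, therefore, reduces to whether the sketch you give is internally sound.

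Your strategy---reducing everything to the identity $\langle II^{S}(\textbf{u},\textbf{v}),\,\pi\circ\xi^{-1}(\nu)\rangle = II_{\nu}(\textbf{u},\textbf{v})$ for $\nu\in E_{p}$---is correct and is essentially the natural approach. A few points deserve more care. First, the symbol $\xi^{-1}(E_{p})$ is an abuse: $\xi$ is the orthogonal projection $\mathbb{R}^{5}\to\mathbb{R}^{4}$ along $\partial_{x_{2}}$, so the literal preimage of a $2$-plane is $3$-dimensional. What is meant (and what you implicitly use) is the unique $2$-plane in $N_{\xi^{-1}(p)}N$ mapped onto $E_{p}$ by $\xi$; this exists precisely because the kernel of $\xi$ is tangent to $N$, so $\xi$ restricts to a linear isometry $N_{\xi^{-1}(p)}N\to N_{p}M$. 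You should make that explicit. Second, in the sentence ``the normal components of the Hessians of $h$ are the $P$-projections of the second derivatives of $f$'' you mean the second derivatives of $(f_{1},f_{2},f_{3})$ (equivalently, of the immersion $i$), not of $f$, which parametrises $M$; as written the indices do not match. Third, the observation that $\pi\circ\xi^{-1}(\nu)=\xi^{-1}(\nu)$ because $\xi^{-1}(\nu)$ already lies in the target $4$-space of $\pi$ is what makes the identity clean, and it is worth stating rather than leaving implicit.

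With these clarifications your derivation of (i)--(iii) goes through: once the scalar second fundamental forms along corresponding normal directions coincide, the definitions of asymptotic direction, binormal direction, and point type transfer verbatim. Your closing remark about the non-canonical choice of $E_{p}$ when $\Delta_{p}$ degenerates is the right caveat, and the paper's earlier observation that the count of binormals is independent of that choice is exactly what closes the gap.
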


The singularity $I_{k}$, $k\geqslant1$, given by the $\mathcal{A}$-normal form $(x,y)\mapsto(x,xy,y^{2},y^{2k+1})$ has an interesting property: every map germ $\mathcal{A}$-equivalent to it prarametrises a corank $1$ surface in $\mathbb{R}^{4}$ whose curvature parabola is a non degenerate parabola. Moreover, $I_{k}$ are the only singularities having this property. Hence, every map germ $\mathcal{A}$-equivalent to $I_{k}$ is $\mathcal{R}^{2}\times\mathcal{O}(4)$-equivalent to the normal form
$f:(\mathbb{R}^{2},0)\rightarrow(\mathbb{R}^{4},0)$ where $$f(x,y)=(x,xy+p(x,y),b_{20}x^{2}+b_{11}xy+b_{02}y^{2}+q(x,y),c_{20}x^{2}+r(x,y))$$
with $b_{02}>0$ and $p,q,r\in\mathcal{M}_{2}^{3}$. The proof of this assertion can be found in \cite{Benedini/Sinha/Ruas}.

\begin{prop}\cite{Benedini/Sinha/Ruas}
Consider the $\mathcal{R}^{2}\times\mathcal{O}(4)$ normal form of the singularity $I_{k}$ given above. Then, the singularity $I_{k}$ is hyperbolic, parabolic or elliptic if and only if $b_{20}$ is positive, zero or negative, respectively.
\end{prop}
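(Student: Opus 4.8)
The plan is to read the second fundamental form of the normal form off its $2$-jet, identify the plane $E_p$ from the curvature parabola, and then rewrite ``$\textbf{u}$ is an asymptotic direction'' as a single homogeneous quadratic equation in $(\alpha,\beta)$ whose number of real projective solutions is controlled by the sign of $b_{20}$.

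First I would compute the low-order derivatives at $q=(0,0)$. Since $p,q,r\in\mathcal{M}_2^3$, they contribute nothing to the $1$- and $2$-jets, so $f_x(0,0)=(1,0,0,0)$ and $f_y(0,0)=0$; hence $T_pM$ is the $X$-axis and $N_pM$ the $YZW$-hyperplane, exactly as in Example \ref{ex}. Projecting the second derivatives onto $N_pM$ (which here only drops the already-vanishing $X$-components) gives, in the frame $\{\textbf{e}_2,\textbf{e}_3,\textbf{e}_4\}$, the vectors $f_{xx}^{\perp}=(0,2b_{20},2c_{20})$, $f_{xy}^{\perp}=(1,b_{11},0)$, $f_{yy}^{\perp}=(0,2b_{02},0)$, so that $II$ is represented by
\[
\begin{pmatrix}
0 & 1 & 0 \\
2b_{20} & b_{11} & 2b_{02} \\
2c_{20} & 0 & 0
\end{pmatrix}.
\]
Evaluating $\eta_q(1,y)=f_{xx}^{\perp}+2y\,f_{xy}^{\perp}+y^2 f_{yy}^{\perp}=(2y,\,2b_{02}y^2+2b_{11}y+2b_{20},\,2c_{20})$ shows that $\Delta_p$ is a non-degenerate parabola (here $b_{02}>0$ is used) lying in the affine plane $\{W=2c_{20}\}$. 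Thus $\mathcal{A}ff_p$ is that plane and $E_p=\operatorname{span}\{\textbf{e}_2,\textbf{e}_3\}$ is \emph{uniquely} determined.

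Next I would set up the asymptotic-direction condition. Writing $\textbf{u}=(\alpha,\beta)$ and a general nonzero $\nu=a\,\textbf{e}_2+b\,\textbf{e}_3\in E_p$, the requirement that $II_{\nu}(\textbf{u},\textbf{v})=\langle II(\textbf{u},\textbf{v}),\nu\rangle=0$ for all $\textbf{v}$ means precisely that $\nu$ is orthogonal to both $II(\textbf{u},\partial_x)=\alpha f_{xx}^{\perp}+\beta f_{xy}^{\perp}$ and $II(\textbf{u},\partial_y)=\alpha f_{xy}^{\perp}+\beta f_{yy}^{\perp}$. This is a homogeneous linear system in $(a,b)$, and a nonzero $\nu\in E_p$ exists if and only if its coefficient determinant vanishes. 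A short computation shows this determinant equals $2\bigl(b_{02}\beta^2-b_{20}\alpha^2\bigr)$, the cross terms in $b_{11}$ cancelling, so $\textbf{u}$ is asymptotic exactly when
\[
b_{02}\beta^2-b_{20}\alpha^2=0.
\]

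Finally I would count real directions $(\alpha:\beta)$ solving this, using $b_{02}>0$: if $b_{20}>0$ there are two, $\beta/\alpha=\pm\sqrt{b_{20}/b_{02}}$, so $p$ is hyperbolic; if $b_{20}=0$ the equation forces $\beta=0$, leaving the single direction $(1:0)$, so $p$ is parabolic; and if $b_{20}<0$ the two terms have opposite signs and vanish only at $(\alpha,\beta)=(0,0)$, so there is no asymptotic direction and $p$ is elliptic. By Definition \ref{pointstypes} this is exactly the claimed equivalence. The only subtle points, and hence the main obstacle, are that $\nu$ must be constrained to lie in $E_p$ rather than all of $N_pM$ (which is where $b_{02}>0$ and the non-degeneracy of $\Delta_p$ enter, to pin down $E_p$ unambiguously) and the observation that the $\mathcal{M}_2^3$ tails are irrelevant since asymptotic directions depend only on the $2$-jet; the remaining algebra is routine.
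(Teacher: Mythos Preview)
The paper does not actually prove this proposition; it is quoted from \cite{Benedini/Sinha/Ruas} without argument, so there is no ``paper's own proof'' to compare against. Your proof is correct and complete: you correctly identify $E_p=\operatorname{span}\{\textbf{e}_2,\textbf{e}_3\}$ from the curvature parabola, reduce the existence of a nonzero $\nu\in E_p$ with $II_\nu(\textbf{u},\cdot)=0$ to the vanishing of a $2\times2$ determinant, and obtain the discriminating equation $b_{02}\beta^2-b_{20}\alpha^2=0$, whose real projective solutions are counted by the sign of $b_{20}$ since $b_{02}>0$. The only cosmetic slip is the phrase ``the two terms have opposite signs'' in the elliptic case: for $b_{20}<0$ both summands $b_{02}\beta^2$ and $-b_{20}\alpha^2$ are nonnegative, which is precisely why their sum vanishes only at the origin; the conclusion is unaffected.
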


For corank $1$ surfaces in $\mathbb{R}^{4}$ we have the following:
%Geometrically speaking, the definition of these invariants are very similar in both cases: they measure, in a sense, the distance between the respective locus of curvature of the surface at the point and the point itself. In our case, the umbilic curvature will perform a resemblant role. However, it will be a second order invariant, meaning that it will depend on the second order information of the surface (in our case, this is provided by the map $g:\tilde{M}\rightarrow\mathbb{R}^{4}$ considered before). In other words, the umbilic curvature is a $\mathcal{R}^{2}\times\mathcal{O}(4)$-invariant.

\begin{definition}\label{def.curvatura}
The non-negative number
$$\kappa_{u}(p)=d(p,\mathcal{A}ff_{p})$$
is called the \emph{umbilic curvature} of $M$ at $p$.
\end{definition}

The authors in \cite{Benedini/Sinha/Ruas} present explicit formulas of this invariant as well as geometric interpretations of it. Here, however, we shall restrict our study to the case where $\Delta_{p}$ is a non degenerate parabola.

\begin{prop}\cite{Benedini/Sinha/Ruas}
Let $\{\nu_{1},\nu_{2},\nu_{3}\}$ be an othonormal frame of $N_{p}M$ such that $E_{p}=\{\nu_{1},\nu_{2}\}$ and $E_{p}^{\perp}=\{\nu_{3}\}$. Then the following holds:
$$\kappa_{u}(p)=\frac{|II_{\nu_{3}}(\textbf{u},\textbf{u})|}{I(\textbf{u},\textbf{u})}=|\mbox{proj}_{\nu_{3}}\eta(y)|=|\langle \eta(y),\nu_{3}\rangle|,$$
for any $\textbf{u}\in T_{q}\tilde{M}$, where $\eta$ is a parametrisation of $\Delta_{p}$.
\end{prop}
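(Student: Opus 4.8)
The plan is to compute the distance $d(p,\mathcal{A}ff_{p})$ directly, exploiting the fact that $\nu_{3}$ is the unit normal to the affine plane containing the curvature parabola. Since $\Delta_{p}$ is a non-degenerate parabola, $\mathcal{A}ff_{p}$ is a genuine $2$-plane and, by definition of $E_{p}$, one has $\mathcal{A}ff_{p}=w_{0}+E_{p}$ for any fixed $w_{0}\in\mathcal{A}ff_{p}$, where $E_{p}=\{\nu_{1},\nu_{2}\}$ and $E_{p}^{\perp}=\{\nu_{3}\}$. First I would observe that the distance from the origin $p$ of $N_{p}M$ to this affine plane is exactly the length of the orthogonal projection of $w_{0}$ onto the normal direction $\nu_{3}$, namely $d(p,\mathcal{A}ff_{p})=|\langle w_{0},\nu_{3}\rangle|$.

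The geometric core of the argument is that the $\nu_{3}$-component is constant along $\mathcal{A}ff_{p}$, and hence along the trace $\Delta_{p}$. Indeed, any two points of $\mathcal{A}ff_{p}$ differ by a vector of $E_{p}$, which is orthogonal to $\nu_{3}$; therefore $\langle\eta(y),\nu_{3}\rangle=\langle w_{0},\nu_{3}\rangle$ is independent of $y$, since every point $\eta(y)$ of $\Delta_{p}$ lies in $\mathcal{A}ff_{p}$. Combining this with the previous step yields at once
$$\kappa_{u}(p)=d(p,\mathcal{A}ff_{p})=|\langle\eta(y),\nu_{3}\rangle|=|\mbox{proj}_{\nu_{3}}\eta(y)|,$$
which already establishes the two right-hand equalities for every value of the parameter $y$.

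It then remains to identify this quantity with the normalised form $|II_{\nu_{3}}(\textbf{u},\textbf{u})|/I(\textbf{u},\textbf{u})$. For a unit tangent vector $\textbf{u}\in C_{q}$ one has $\eta=II(\textbf{u},\textbf{u})$ by Definition \ref{curvatureparabola}, so $\langle\eta,\nu_{3}\rangle=\langle II(\textbf{u},\textbf{u}),\nu_{3}\rangle=II_{\nu_{3}}(\textbf{u},\textbf{u})$ while $I(\textbf{u},\textbf{u})=1$, giving the claim for unit vectors. For an arbitrary $\textbf{u}\in T_{q}\tilde{M}$ outside the kernel of the pseudometric, I would invoke homogeneity of degree two: both $\textbf{u}\mapsto II_{\nu_{3}}(\textbf{u},\textbf{u})$ and $\textbf{u}\mapsto I(\textbf{u},\textbf{u})$ are quadratic forms, so rescaling $\textbf{u}$ to the unit vector $\hat{\textbf{u}}=\textbf{u}/\sqrt{I(\textbf{u},\textbf{u})}$ multiplies numerator and denominator by the same factor and leaves the ratio unchanged, reducing the general case to the unit case.

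The main subtlety to be careful about is precisely the constancy of $\langle\eta(y),\nu_{3}\rangle$ in $y$: this is what makes the expression $|II_{\nu_{3}}(\textbf{u},\textbf{u})|/I(\textbf{u},\textbf{u})$ independent of the chosen direction $\textbf{u}$, and it rests entirely on the definition of $E_{p}$ as the subspace parallel to $\mathcal{A}ff_{p}$ in the non-degenerate case. One should also note that the formula is meaningful only for directions with $I(\textbf{u},\textbf{u})\neq0$, that is, away from the kernel of the first fundamental form, so the phrase \emph{for any} $\textbf{u}$ is to be read in that sense.
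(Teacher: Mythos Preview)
Your argument is correct. The paper does not give its own proof of this proposition; it is quoted from \cite{Benedini/Sinha/Ruas} as background, so there is no in-paper proof to compare against. Your approach---using that $\mathcal{A}ff_{p}=w_{0}+E_{p}$ in the non-degenerate case, so that the $\nu_{3}$-component of any point of $\Delta_{p}$ equals $d(p,\mathcal{A}ff_{p})$, and then invoking the quadratic homogeneity of $I$ and $II_{\nu_{3}}$ to pass from unit to arbitrary $\textbf{u}$ with $I(\textbf{u},\textbf{u})\neq0$---is exactly the natural one and matches the spirit of Definition~\ref{def.curvatura}. Your closing caveat about the null directions of the pseudometric is also the right way to read the statement.
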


%The umbilic curvature does not depend on the choice of the adapted frame of $E_{p}$, nor the frame of $N_{p}M$, nor the parametrisation of $\eta$, nor the choice of local coordinates of $\tilde{M}$. However, it depends on the map $g:\tilde{M}\rightarrow\mathbb{R}^{4}$ that parametrises $M$. Indeed, the surface given by $f(x,y)=(x,y^{2},y^{3},x^{2}y)$ has $\kappa_{u}(0)=0$. On the other hand, the same surface given by $\bar{f}(x,y)=(x,(y^{3}+x)^{2},(y^{3}+x)^{3},(y^{3}+x)^{2}y)$ has $\bar{\kappa}_{u}(0)=2$.

%%%%%%%%%%%%%%%%%%%%%%%%%%%%%%%%%%%%%%%%%%%%%%%%%%%%%%%%%%%%%%%%%%%%%%%%%%%%%%%%%%%%%%%%%%%%%%%%%%%%%%%%%%%%%%%%%%%%%%%%%%%%%%%%%%%%%%%%%%%%%%%%%%

\section{Flat geometry}

In this section we study the contact of a singular surface $M\subset\mathbb{R}^{4}$ locally given by the $\mathcal{A}$-normal form $(x,y)\mapsto(x,xy,y^{2},y^{3})$ with hyperplanes. One can summarize the \emph{modus operandi} in the following way: we fix a model of the singularity $I_{1}$ and study the contact
with the zero fibres of submersions. We then associate the singularities of the height functions with the geometry studied in the previous section.

\subsection{Functions on $I_{1}$}

In this section, we classify germs of functions on $\texttt{X}\subset\mathbb{R}^{4}$, where $\texttt{X}$ is the germ of the model surface locally parametrised by the $I_{1}$ singularity. This technique was introduced in \cite{BruceWest}, where the authors study the contact between the Whitney umbrella (or crosscap) with planes. More recently, the same was done in \cite{OsetSinhaTari2} and \cite{Oset/Saji} but this time the surfaces were the cuspidal edge and the folded umbrella, respectively.

We denote by $\mathcal{E}_{n}$ the local ring of germs of functions $f:(\mathbb{R}^{n},0)\rightarrow\mathbb{R}$ and by $\mathcal{M}_{n}$ its maximal ideal.
Let $(\texttt{X},0)\subset(\mathbb{R}^{n},0)$ be a germ of a reduced analytic subvariety of $\mathbb{R}^{n}$ at $0$ defined by an ideal $I$ of $\mathcal{E}_{n}$. A diffeomorphism $k:(\mathbb{R}^{n},0)\rightarrow(\mathbb{R}^{n},0)$ is said to preserve $\texttt{X}$ if $(k(\texttt{X},0))=(\texttt{X},0)$.
The group of such diffeomorphisms is a subgroup of the group $\mathcal{R}$ and is denoted by $\mathcal{R}(\texttt{X})$. This is one of the Damon's ``geometrical subgroups" of $\mathcal{K}$ (see \cite{Bruce/Roberts,Damon}).

Consider the $\mathcal{A}$-normal form of the $I_{1}$ singularity: $f(x,y)=(x,xy,y^{2},y^{3})$. Our aim is to classify germs of submersions $g:(\mathbb{R}^{4},0)\rightarrow(\mathbb{R},0)$ using the $\mathcal{R}(\texttt{X})$ equivalence, where $\texttt{X}=f(\mathbb{R}^{2},0)$ is our model surface. The ideal $I\lhd\mathcal{E}_{4}$ of irreducible polynomials defining $\texttt{X}$ is given by
$$I=\langle Y^{2}-X^{2}Z,\ W^{2}-Z^{3},\ XW-YZ,\ YW-XZ^{2} \rangle.$$

We shall denote by $\Theta(\texttt{X})$ the $\mathcal{E}_{4}$-module of vector fields tangent to $\texttt{X}$ (Derlog$(\texttt{X})$ in other texts). Hence, we have
$$\xi\in\Theta(\texttt{X}) \Leftrightarrow\xi h(x)=dh_{x}(\xi(x))\in I,\ \forall h\in I.$$

\begin{prop}\label{derlog}
$\Theta(\texttt{X})$ is generated by:
$$\begin{array}{ll}\vspace{0.3cm}
\xi_{1}=X\frac{\partial}{\partial X}+Y\frac{\partial}{\partial Y},&\ \xi_{2}=X^{2}\frac{\partial}{\partial X}+2Y\frac{\partial}{\partial Z}+3XZ\frac{\partial}{\partial W}, \\ \vspace{0.3cm}
\xi_{3}=Y\frac{\partial}{\partial Y}+2Z\frac{\partial}{\partial Z}+3W\frac{\partial}{\partial W},&\ \xi_{4}=Y\frac{\partial}{\partial X}+XZ\frac{\partial}{\partial Y},\\ \vspace{0.3cm}
\xi_{5}=Z\frac{\partial}{\partial X}+W\frac{\partial}{\partial Y},&\ \xi_{6}=XZ\frac{\partial}{\partial Y}+2W\frac{\partial}{\partial Z}+3Z^{2}\frac{\partial}{\partial W},\\ \vspace{0.3cm}
\xi_{7}=W\frac{\partial}{\partial X}+Z^{2}\frac{\partial}{\partial Y},&\ \xi_{8}=(Y^{2}-X^{2}Z)\frac{\partial}{\partial W},\\ \vspace{0.3cm}
\xi_{9}=(YZ-XW)\frac{\partial}{\partial W},&\ \xi_{10}=XW\frac{\partial}{\partial Y}+2Z^{2}\frac{\partial}{\partial Z}+3ZW\frac{\partial}{\partial W},\\ \vspace{0.3cm}
\xi_{11}=(YW-XZ^{2})\frac{\partial}{\partial W},&\ \xi_{12}=(W^{2}-Z^{3})\frac{\partial}{\partial W},\\ \vspace{0.3cm}
\xi_{13}=(W^{2}-Z^{3})\frac{\partial}{\partial Y}.\vspace{0.3cm}
\end{array}$$
\end{prop}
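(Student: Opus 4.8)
The plan is to translate tangency into an explicit finite set of membership conditions, verify the listed fields satisfy them (the easy inclusion), and then show they exhaust $\Theta(\texttt{X})$ (the hard inclusion) by exploiting the weighted homogeneity of $\texttt{X}$. Write $g_1=Y^2-X^2Z$, $g_2=W^2-Z^3$, $g_3=XW-YZ$, $g_4=YW-XZ^2$ for the generators of $I$, and let $\xi=a\frac{\partial}{\partial X}+b\frac{\partial}{\partial Y}+c\frac{\partial}{\partial Z}+d\frac{\partial}{\partial W}$ with $a,b,c,d\in\mathcal{E}_4$. Since $\xi(h)\in I$ for all $h\in I$ is equivalent to $\xi(g_j)\in I$ for $j=1,\dots,4$, the condition $\xi\in\Theta(\texttt{X})$ is exactly that
\begin{align*}
\xi(g_1) &= -2XZ\,a + 2Y\,b - X^2\,c,\\
\xi(g_2) &= -3Z^2\,c + 2W\,d,\\
\xi(g_3) &= W\,a - Z\,b - Y\,c + X\,d,\\
\xi(g_4) &= -Z^2\,a + W\,b - 2XZ\,c + Y\,d
\end{align*}
all lie in $I$.

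For the easy inclusion I would check these four memberships for each $\xi_i$. Several are immediate: $\xi_1$ and $\xi_3$ are the Euler fields associated to the scalings of $x$ and of $y$ respectively, so they send each weighted-homogeneous $g_j$ to a scalar multiple of itself; and $\xi_8,\xi_9,\xi_{11},\xi_{12},\xi_{13}$ are each of the form $(\text{generator of }I)\cdot\frac{\partial}{\partial Y}$ or $(\text{generator of }I)\cdot\frac{\partial}{\partial W}$, so $\xi_i(g_j)$ is that generator times $\frac{\partial g_j}{\partial Y}$ or $\frac{\partial g_j}{\partial W}$ and lies in $I$ automatically. The remaining fields are dispatched by a short direct computation, each $\xi_i(g_j)$ turning out to be an explicit $\mathcal{E}_4$-multiple of one of the $g_k$.

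For the hard inclusion I would use the parametrisation. The germ $f$ is injective and an immersion off the origin, so $\texttt{X}\setminus\{0\}$ is a smooth surface with tangent plane spanned by $f_x=(1,y,0,0)$ and $f_y=(0,x,2y,3y^2)$. Tangency of $\xi$ therefore forces $\xi\circ f=\lambda f_x+\mu f_y$ for germs $\lambda,\mu\in\mathcal{E}_2$; comparing coordinates gives $\lambda=a\circ f$, $\ c\circ f=2\mu y$, $\ d\circ f=3\mu y^2$ and $b\circ f=\lambda y+\mu x$. Since $f$ is finitely $\mathcal{A}$-determined this identity extends across the origin, so $\Theta(\texttt{X})$ coincides with the module of liftable vector fields of $f$, and solving these pull-back equations for $(a,b,c,d)$ describes $\Theta(\texttt{X})$ completely. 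The key structural input is that $\texttt{X}$, and with it $I$ and $\Theta(\texttt{X})$, is weighted homogeneous for the weights $(1,2,2,3)$ on $(X,Y,Z,W)$ coming from $\mathrm{wt}(x)=\mathrm{wt}(y)=1$; one checks that each $\xi_i$ is weighted homogeneous (of weight between $0$ and $4$, with $\xi_1,\xi_3$ of weight $0$). Hence $\Theta(\texttt{X})$ is a graded $\mathcal{E}_4$-module and it suffices to prove generation degree by degree: in each weighted degree $d$ the space $\Theta(\texttt{X})_d$ is finite dimensional, and I would reduce an arbitrary homogeneous tangent field of degree $d$ by subtracting $\mathcal{E}_4$-combinations of the $\xi_i$ to cancel its coefficients one by one, using the pull-back equations to control which monomials can occur. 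As $\Theta(\texttt{X})$ is finitely generated over the Noetherian ring $\mathcal{E}_4$, by graded Nakayama it is enough to carry this out up to the top weight among the generators, after which no new generators can appear.

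The main obstacle is this completeness step. The bookkeeping is delicate because $\texttt{X}$ has codimension two rather than being a hypersurface, so $\Theta(\texttt{X})$ carries nontrivial syzygies, and several generators (those with coefficients in $I$) act only in the $\frac{\partial}{\partial Y}$ and $\frac{\partial}{\partial W}$ directions, encoding the freedom of adding a field that is tangent merely because its coefficients already lie in $I$. Organising the reduction amounts to a standard-basis computation for the module of solutions of the four congruences, and tracking exactly which $g_k$-multiples arise when clearing each coefficient is where the calculation is heaviest; I expect this to be the step most safely confirmed with a computer-algebra system, the weighted-homogeneous grading supplying the a priori degree bound that makes the verification finite.
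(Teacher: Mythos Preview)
Your approach and the paper's coincide in essence: both rewrite $\xi\in\Theta(\texttt{X})$ as the four conditions $\xi(g_j)\in I$ and both delegate the completeness of the list to a syzygy computation in a computer-algebra system. The paper is simply more blunt about it: it packages the conditions as the intersection of the kernels of the maps $\Phi_j(\xi,\alpha)=\xi(g_j)-\sum_i\alpha_ig_i$, computes these kernels in \textsc{Singular}, projects, and then checks a posteriori that each resulting field is liftable. Your weighted-homogeneity observation (weights $(1,2,2,3)$ on $(X,Y,Z,W)$) is a genuine organising principle the paper does not invoke, and it correctly makes $\Theta(\texttt{X})$ graded; but it does not by itself supply an a priori bound on the top weight of a minimal generator, so your ``graded Nakayama'' step is circular as stated and is still only terminated by the CAS, exactly as you concede in your last paragraph.

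One point needs tightening. The assertion ``since $f$ is finitely $\mathcal A$-determined \dots\ $\Theta(\texttt{X})$ coincides with the module of liftable vector fields of $f$'' is not justified: in general one only has $\Lift(f)\subseteq\Theta(\texttt{X})$, and $\mathcal A$-finite determinacy alone does not force equality. For this particular germ the equality does hold, but the paper obtains it in the opposite order, first computing $\Theta(\texttt{X})$ via syzygies and then verifying that each generator is liftable. If you want to route the hard inclusion through the pull-back equations you need an independent argument for $\Theta(\texttt{X})=\Lift(f)$; otherwise stay with the ideal-membership conditions, which already carry all the information you need.
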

\begin{proof}
For notation purposes we write $(X,Y,Z,W)=(X_1,X_2,X_3,X_4)$. We are looking for vector fields $\xi=\sum_{i=1}^4\xi_i\frac{\partial}{\partial X_i}\in \theta_4$ such that for each $j=1,\ldots,4$ there exist functions $\alpha_i(X_1,\ldots,X_4)$ such that $$\sum_{i=1}^4\xi_i\frac{\partial h_j}{\partial X_i}=\sum_{i=1}^4\alpha_ih_i.$$ Consider, for $j=1,\ldots,4$, the map $\Phi_j:\mathcal E_4^8\rightarrow \mathbb R$ given by $$\Phi_j(\xi,\alpha)=\sum_{i=1}^4\xi_i\frac{\partial h_j}{\partial X_i}-\sum_{i=1}^4\alpha_ih_i,$$ where $\xi=(\xi_1,\ldots,\xi_4)\in\mathcal E_4^4$ and $\alpha=(\alpha_1,\ldots,\alpha_4)\in\mathcal E_4^4$. Let $A_j=\ker\Phi_j$. Let $\pi:\mathcal E_4^8\rightarrow \mathcal E_4^4$ be the canonical projection given by $\pi(\xi,\alpha)=\xi$. Let $B_j=\pi(A_j)$. Then $$\Theta(\texttt{X})=\bigcap_{j=1}^4B_j.$$

In order to obtain the $A_j$ we use syzygies in the computer package Singular. It can be checked that all the vector fields obtained by this method are, in fact, liftable, i.e. there exists a vector field $\eta\in\theta_2$ such that $dh(\eta)=\xi\circ h$, and are therefore tangent to $\texttt{X}$.
\end{proof}

The idea for classifying analytic function germs $g:(\mathbb{R}^{4},0)\rightarrow(\mathbb{R},0)$ up to $\mathcal{R}(\texttt{X})$-equivalence is to use generalisations of the standard results for the group $\mathcal{R}$, that is, when $\texttt{X}=\emptyset$. Since $\mathcal{R}(\texttt{X})$ is one of the Damon's ``geometrical subgroups" of $\mathcal{K}$, there are versions of the unfolding and determinacy theorems. In this classification, the orbits are obtained inductively on the jet level and the complete transversal method is also adapted for our action.

We define $\Theta_{1}(\texttt{X})=\{\xi\in\Theta(\texttt{X}):j^{1}\xi=0\}$. Hence, from Proposition \ref{derlog},
$$\Theta_{1}(\texttt{X})=\mathcal{M}_{4}\{\xi_{1}\ldots,\xi_{7}\}+\mathcal{E}_{4}\{\xi_{8},\ldots,\xi_{13}\}.$$
For each $f\in\mathcal{E}_{4}$, $\Theta(\texttt{X})\cdot f=\{\xi(f):\xi\in\Theta(\texttt{X})\}$. A similar definition is made for $\Theta_{1}(\texttt{X})\cdot f$. Furthermore, we define the \emph{tangent spaces} to the $\mathcal{R}(\texttt{X})$-orbit of $f$:
$$L\mathcal{R}_{1}(\texttt{X})\cdot f=\Theta_{1}(\texttt{X})\cdot f,\ L\mathcal{R}(\texttt{X})\cdot f=L\mathcal{R}_{e}(\texttt{X})\cdot f=\Theta(\texttt{X})\cdot f.$$

The $\mathcal{R}(\texttt{X})$-codimension is given by $d(f,\mathcal{R}(\texttt{X}))=\dim_{\mathbb{R}}(\mathcal{E}_{4}/L\mathcal{R}(\texttt{X})\cdot f)$.

\begin{prop}\cite{BruceWest}\label{transversal}
Let $f:(\mathbb{R}^{4},0)\rightarrow(\mathbb{R},0)$ be a smooth germ and $h_{1},\ldots,h_{r}$ be homogeneous polynomials of degree $k+1$ with the property that
$$\mathcal{M}_{4}^{k+1}\subset L\mathcal{R}_{1}(\texttt{X})\cdot f+sp\{h_{1},\ldots,h_{r}\}+\mathcal{M}_{4}^{k+2}.$$
Then any germ $g$ with $j^{k}f(0)=j^{k}g(0)$ is $\mathcal{R}_{1}(\texttt{X})$-equivalent to a germ of the form $f+\sum_{i=1}^{r}u_{i}h_{i}+\phi$, where $\phi\in \mathcal{M}_{4}^{k+2}$. The vector subspace $sp\{h_{1},\ldots,h_{r}\}$ is called a complete $(k+1)-\mathcal{R}(\texttt{X})$-transversal of $f$.
\end{prop}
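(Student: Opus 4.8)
The plan is to prove the statement at the level of $(k+1)$-jets. It suffices to produce a single diffeomorphism $\sigma\in\mathcal{R}_{1}(\texttt{X})$ and an element $t\in T:=sp\{h_{1},\ldots,h_{r}\}$ with $g\circ\sigma-(f+t)\in\mathcal{M}_{4}^{k+2}$, since then $g$ is $\mathcal{R}_{1}(\texttt{X})$-equivalent to $f+t+\phi$ with $\phi:=g\circ\sigma-(f+t)\in\mathcal{M}_{4}^{k+2}$ and $t=\sum_{i}u_{i}h_{i}$. Because $j^{k}f(0)=j^{k}g(0)$, I would first write $g=f+p+\psi$, where $p$ is the homogeneous part of degree $k+1$ of $g-f$ and $\psi\in\mathcal{M}_{4}^{k+2}$. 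Observe that no induction over jet levels is needed, since the conclusion only asks for a reduction modulo $\mathcal{M}_{4}^{k+2}$; this is what distinguishes the transversal statement from a finite-determinacy theorem.

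The heart of the argument is one infinitesimal (homological) step. Since $p\in\mathcal{M}_{4}^{k+1}$, the hypothesis lets me decompose $p=\zeta(f)+t+\omega$ with $\zeta\in\Theta_{1}(\texttt{X})$, $t\in T$ and $\omega\in\mathcal{M}_{4}^{k+2}$, where $\zeta(f)=df(\zeta)\in\Theta_{1}(\texttt{X})\cdot f=L\mathcal{R}_{1}(\texttt{X})\cdot f$. As $p$ and $t$ are homogeneous of degree $k+1$ and $\omega\in\mathcal{M}_{4}^{k+2}$, comparing terms of degree $\leq k$ forces $\zeta(f)=p-t-\omega\in\mathcal{M}_{4}^{k+1}$, with homogeneous part of degree $k+1$ equal to $p-t$.

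Next I would integrate $\zeta$. Every $\zeta\in\Theta_{1}(\texttt{X})$ is tangent to $\texttt{X}$ and has $j^{1}\zeta=0$; indeed $\Theta_{1}(\texttt{X})\subset\mathcal{M}_{4}^{2}\theta_{4}$, which is immediate from the explicit generators of Proposition \ref{derlog}. Hence its local flow $\Phi_{s}$ consists of germs of diffeomorphisms fixing $0$ with identity $1$-jet and preserving $\texttt{X}$, so $\sigma:=\Phi_{-1}\in\mathcal{R}_{1}(\texttt{X})$. Expanding along the flow, and recalling that modulo $\mathcal{M}_{4}^{k+2}$ only finitely many terms survive, gives
$$g\circ\sigma=g-\zeta(g)+\tfrac{1}{2}\zeta(\zeta(g))-\cdots.$$
Since $\zeta\in\mathcal{M}_{4}^{2}\theta_{4}$ raises the $\mathcal{M}_{4}$-order by at least one, one has $\zeta(g-f)\in\mathcal{M}_{4}^{k+2}$ (as $g-f\in\mathcal{M}_{4}^{k+1}$), and every iterated term involving $\zeta$ at least twice lies in $\mathcal{M}_{4}^{k+2}$ (because $\zeta(g)\in\mathcal{M}_{4}^{k+1}$). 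Thus $\zeta(g)\equiv\zeta(f)\equiv p-t$ modulo $\mathcal{M}_{4}^{k+2}$, and therefore $g\circ\sigma\equiv g-(p-t)=f+t+\psi\equiv f+t$ modulo $\mathcal{M}_{4}^{k+2}$, which is exactly the reduction sought.

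I expect the only delicate point to be the integration step: one must know that a germ of vector field tangent to the \emph{singular} variety $\texttt{X}$ integrates to a germ of diffeomorphism preserving $\texttt{X}$, so that $\sigma$ genuinely lies in $\mathcal{R}_{1}(\texttt{X})$ and not merely in the full unipotent group $\mathcal{R}_{1}$. This is guaranteed by the fact that $\mathcal{R}(\texttt{X})$ is one of Damon's geometric subgroups of $\mathcal{K}$, whose tangent space is precisely $\Theta(\texttt{X})$ and which carries the usual exponentiation and unfolding machinery; everything else is the routine filtration bookkeeping carried out above.
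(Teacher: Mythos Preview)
The paper does not prove this proposition at all: it is quoted verbatim from \cite{BruceWest} and used as a black box, so there is no ``paper's own proof'' to compare against. Your argument is a correct self-contained proof of the statement, carried out by integrating a single vector field in $\Theta_{1}(\texttt{X})$ and exploiting that such a field raises the $\mathcal{M}_{4}$-filtration by one, so that on $(k+1)$-jets the induced action is unipotent and the Taylor expansion in the flow parameter terminates.

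By way of comparison with the original Bruce--West argument: they work at the jet level and invoke Mather's lemma (orbits of an algebraic group action on the affine fibre $\{j^{k+1}h:j^{k}h=j^{k}f\}$, together with the observation that $L\mathcal{R}_{1}(\texttt{X})\cdot f$ and $L\mathcal{R}_{1}(\texttt{X})\cdot g$ coincide modulo $\mathcal{M}_{4}^{k+2}$ whenever $j^{k}f=j^{k}g$). Your approach is more elementary in that it avoids Mather's lemma by exhibiting the required diffeomorphism explicitly as a time-$(-1)$ flow; this works precisely because the nilpotency of $\zeta$ on $J^{k+1}$ makes the exponential a finite sum, so no convergence or connectedness issues arise. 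The one point you rightly flag as delicate---that the flow of $\zeta\in\Theta(\texttt{X})$ preserves $\texttt{X}$---is indeed standard (it follows from $\zeta(I)\subset I$ and the ODE for $h\circ\Phi_{s}$ with $h\in I$), and is part of what makes $\mathcal{R}(\texttt{X})$ a geometric subgroup in Damon's sense.
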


\begin{coro}\cite{BruceWest}\label{lema-finit.det.}
The following hold:
\begin{itemize}
\item [(i)] If $\Theta_{1}(\texttt{X})\cdot f+\mathcal{M}_{4}^{k+2}\supset\mathcal{M}_{4}^{k+1}$,
then $f$ is $k\mbox{-}\mathcal{R}(\texttt{X})$-determined;
\item[(ii)] If every vector field in $\Theta(\texttt{X})$ vanishes at the origin and
$\Theta(\texttt{X})\cdot f+\mathcal{M}_{4}^{k+2}\supset\mathcal{M}_{4}^{k+1}$, then
$f$ is $(k+1)\mbox{-}\mathcal{R}(\texttt{X})$-determined.
\end{itemize}
\end{coro}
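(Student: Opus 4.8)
The plan is to prove both parts by the homotopy (Mather--Moser) method, the standard tool for finite determinacy relative to one of Damon's geometric subgroups. I treat (i) and (ii) in parallel, writing $\Theta_{*}(\texttt{X})$ for $\Theta_{1}(\texttt{X})$ in case (i) and for $\Theta(\texttt{X})$ in case (ii), and setting $m=k+1$ in case (i) and $m=k+2$ in case (ii).

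First I would reduce determinacy to the triviality of a homotopy. Let $g$ agree with $f$ to the relevant jet order, so that $\phi:=g-f\in\mathcal{M}_{4}^{m}$, and put $F_{t}=f+t\phi$ for $t\in[0,1]$. It suffices to produce a smooth family of germs $\psi_{t}\in\mathcal{R}(\texttt{X})$ (with identity $1$-jet in case (i)) such that $\psi_{0}=\mathrm{id}$ and $F_{t}\circ\psi_{t}=f$, since at $t=1$ this gives $g\circ\psi_{1}=f$. Differentiating in $t$ and setting $\xi_{t}=\dot\psi_{t}\circ\psi_{t}^{-1}$ turns this into the problem of solving, for each $t$, the homological equation
\[
\xi_{t}\cdot F_{t}=-\phi,\qquad \xi_{t}\in\Theta_{*}(\texttt{X}),
\]
with smooth dependence on $t$; integrating such a $t$-dependent field tangent to $\texttt{X}$ yields the desired flow, which preserves $\texttt{X}$ exactly because $\xi_{t}\in\Theta(\texttt{X})$.

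The algebraic heart of the argument is to show $\phi\in\Theta_{*}(\texttt{X})\cdot F_{t}$ for every $t$. I would first note that the generators of $\Theta_{1}(\texttt{X})$ have components in $\mathcal{M}_{4}^{2}$ (vanishing $1$-jet), while in case (ii) every field of $\Theta(\texttt{X})$ has components in $\mathcal{M}_{4}$ by hypothesis; in our situation the latter is visible directly from Proposition \ref{derlog}, since every generator $\xi_{i}$ vanishes at the origin. Consequently $\xi\cdot\phi=d\phi(\xi)\in\mathcal{M}_{4}^{k+2}$ for all $\xi\in\Theta_{*}(\texttt{X})$: in (i) because $\phi\in\mathcal{M}_{4}^{k+1}$ gives $d\phi\in\mathcal{M}_{4}^{k}$ against components in $\mathcal{M}_{4}^{2}$, and in (ii) because $\phi\in\mathcal{M}_{4}^{k+2}$ gives $d\phi\in\mathcal{M}_{4}^{k+1}$ against components in $\mathcal{M}_{4}$. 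Hence $\xi\cdot F_{t}=\xi\cdot f+t\,\xi\cdot\phi\equiv\xi\cdot f\pmod{\mathcal{M}_{4}^{k+2}}$, yielding the crucial identity
\[
\Theta_{*}(\texttt{X})\cdot F_{t}+\mathcal{M}_{4}^{k+2}=\Theta_{*}(\texttt{X})\cdot f+\mathcal{M}_{4}^{k+2}\supset\mathcal{M}_{4}^{k+1},
\]
the last inclusion being precisely the hypothesis. Writing $M_{t}=\Theta_{*}(\texttt{X})\cdot F_{t}$, this forces $\mathcal{M}_{4}^{k+1}+M_{t}=\mathcal{M}_{4}^{k+2}+M_{t}$, so the finitely generated $\mathcal{E}_{4}$-module $Q_{t}=(\mathcal{M}_{4}^{k+1}+M_{t})/M_{t}$ satisfies $Q_{t}=\mathcal{M}_{4}Q_{t}$; Nakayama's lemma then gives $Q_{t}=0$, i.e. $\mathcal{M}_{4}^{k+1}\subset M_{t}$. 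In particular $\phi\in\mathcal{M}_{4}^{m}\subset M_{t}$, so the homological equation is solvable for every $t$.

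The main obstacle, and the step requiring the most care, is upgrading this pointwise-in-$t$ solvability to a single field $\xi_{t}$ depending smoothly on $t$ and integrable to a genuine family of germs of diffeomorphisms. I would circumvent it by running the Nakayama argument relatively over the ring $\mathcal{E}_{5}$ obtained by adjoining the parameter $t$ (a parametrised preparation/Nakayama argument), which delivers $\xi_{t}$ as a smooth family; integration is then legitimate since each $\xi_{t}$ vanishes at $0$, so its flow fixes the origin and exists on a fixed small neighbourhood for $t\in[0,1]$. As $\xi_{t}\in\Theta(\texttt{X})$, the flow preserves $\texttt{X}$ and lies in $\mathcal{R}(\texttt{X})$. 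The difference between the two parts is exactly the order bound on the fields: in (i) the fields of $\Theta_{1}(\texttt{X})$ have order $\geq 2$, permitting $m=k+1$ and $\mathcal{R}_{1}(\texttt{X})$-trivialisations, hence $k$-determinacy; in (ii) the fields of $\Theta(\texttt{X})$ have only order $\geq 1$, so the bound $\xi\cdot\phi\in\mathcal{M}_{4}^{k+2}$ forces $m=k+2$, hence $(k+1)$-determinacy.
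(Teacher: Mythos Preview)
The paper does not give its own proof of this corollary: it is simply quoted from \cite{BruceWest} and used as a black box. Your argument is the standard Mather homotopy method combined with Nakayama's lemma, which is precisely how such determinacy criteria are established for Damon's geometric subgroups (and is the approach in the cited reference). The order bookkeeping is correct in both cases: in (i) the components of fields in $\Theta_{1}(\texttt{X})$ lie in $\mathcal{M}_{4}^{2}$, so for $\phi\in\mathcal{M}_{4}^{k+1}$ one gets $\xi\cdot\phi\in\mathcal{M}_{4}^{k+2}$; in (ii) the hypothesis that every field in $\Theta(\texttt{X})$ vanishes at the origin gives components in $\mathcal{M}_{4}$, and with $\phi\in\mathcal{M}_{4}^{k+2}$ one again obtains $\xi\cdot\phi\in\mathcal{M}_{4}^{k+2}$. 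Your Nakayama step and the parametrised (in $t$) preparation to obtain smooth dependence of $\xi_{t}$ are exactly what is needed, and since each $\xi_{t}$ vanishes at the origin the integrated flow is a well-defined germ in $\mathcal{R}(\texttt{X})$ (with identity $1$-jet in case (i)). In short, your proof is correct and is essentially the argument behind the cited result; the present paper itself offers nothing to compare it with.
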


The next result about trivial families will be needed.

\begin{prop}\cite{BruceWest}\label{trivial}
Let $F:(\mathbb{R}^{4}\times\mathbb{R},(0,0))\rightarrow(\mathbb{R},0)$ be a smooth family of functions such that
$F(0,t)=0$ for $t$ small enough. Also, let $\xi_{1},\ldots,\xi_{p}$ be vector fields in $\Theta(\texttt{X})$ that vanish at the origin. Then, the family $F$ is $k\mbox{-}\mathcal{R}(\texttt{X})$-trivial if
$\frac{\partial F}{\partial t}\in \langle\xi_{1}(F),\ldots,\xi_{p}(F)\rangle+\mathcal{M}_{4}^{k+1}\mathcal{E}_{5}\subset \mathcal{E}_{5}$.
\end{prop}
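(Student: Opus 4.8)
The plan is to prove this by the homotopy (Thom--Levine--Mather) method, adapted to the geometric subgroup $\mathcal{R}(\texttt{X})$: one converts the algebraic hypothesis on $\partial F/\partial t$ into an infinitesimal trivialisation, namely a parametrised vector field tangent to $\texttt{X}$ whose flow supplies the required family of diffeomorphisms. First I would reformulate $k$-$\mathcal{R}(\texttt{X})$-triviality infinitesimally. Writing $F_t = F(\cdot,t)$, we seek a smooth one-parameter family $h_t:(\mathbb{R}^4,0)\to(\mathbb{R}^4,0)$ with $h_0=\mathrm{id}$, each $h_t\in\mathcal{R}(\texttt{X})$, such that $j^k(F_t\circ h_t)=j^kF_0$. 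If $\eta_t$ denotes the time-dependent vector field generating $h_t$ (so that $\dot h_t=\eta_t\circ h_t$), the chain rule gives $\frac{d}{dt}F(h_t(x),t)=\left(\frac{\partial F}{\partial t}+\eta_t(F)\right)(h_t(x),t)$. Hence it suffices to produce $\eta_t\in\Theta(\texttt{X})$ with coefficients in $\mathcal{E}_5$ (i.e. depending on the parameter $t$), vanishing at the origin for all $t$, and satisfying the homological equation $\frac{\partial F}{\partial t}+\eta_t(F)\in\mathcal{M}_4^{k+1}\mathcal{E}_5$.

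Second, I would solve this equation directly from the hypothesis. Since $\frac{\partial F}{\partial t}\in\langle\xi_1(F),\ldots,\xi_p(F)\rangle+\mathcal{M}_4^{k+1}\mathcal{E}_5$, we may write $\frac{\partial F}{\partial t}=\sum_{i=1}^p a_i\,\xi_i(F)+\rho$ with $a_i\in\mathcal{E}_5$ and $\rho\in\mathcal{M}_4^{k+1}\mathcal{E}_5$. Setting $\eta_t=-\sum_{i=1}^p a_i\xi_i$, we have: $\eta_t$ is tangent to $\texttt{X}$ for every $t$ because $\Theta(\texttt{X})$ is a module and each $\xi_i$ lies in it; $\eta_t$ vanishes at the origin because each $\xi_i$ does (this is exactly where the hypothesis that the $\xi_i$ vanish at $0$ enters); and by construction $\frac{\partial F}{\partial t}+\eta_t(F)=\rho\in\mathcal{M}_4^{k+1}\mathcal{E}_5$.

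Third comes the integration step. I would consider the vector field $\frac{\partial}{\partial t}+\eta_t$ on $(\mathbb{R}^4\times\mathbb{R},0)$. Its local flow is defined near the origin, projects to the desired family $h_t$, fixes the origin because $\eta_t$ does, and preserves $\texttt{X}\times\mathbb{R}$ because $\eta_t$ is tangent to $\texttt{X}$; this is the standard fact that a vector field $\xi$ with $\xi(I)\subseteq I$ integrates to diffeomorphisms preserving $V(I)=\texttt{X}$, so that $h_t\in\mathcal{R}(\texttt{X})$. Along the flow we then have $\frac{d}{dt}F(h_t(x),t)=\rho(h_t(x),t)$, and since $h_t$ fixes the origin, $\rho(h_t(\cdot),t)\in\mathcal{M}_4^{k+1}$ for each $t$; integrating from $0$ to $t$ keeps the difference $F(h_t(x),t)-F(x,0)$ in $\mathcal{M}_4^{k+1}$. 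Therefore $j^k(F_t\circ h_t)=j^kF_0$, i.e. $F$ is $k$-$\mathcal{R}(\texttt{X})$-trivial.

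The main obstacle, and the step that must be checked with care, is the integration: one has to verify that the parametrised tangent vector field genuinely exponentiates to a smooth family of $\texttt{X}$-preserving diffeomorphism germs fixing $0$, and that the remainder $\rho$ stays in $\mathcal{M}_4^{k+1}$ after composition with the flow, so that it is invisible at the $k$-jet level. This rests on $\Theta(\texttt{X})$ being closed as a module (as computed in Proposition \ref{derlog}) and on the elementary fact that composition with a diffeomorphism fixing the origin preserves the filtration by powers of $\mathcal{M}_4$. The assumption $F(0,t)=0$ ensures $\frac{\partial F}{\partial t}(0,t)=0$, which is consistent with $\eta_t$ and $\rho$ vanishing at the origin and keeps the entire construction within germs $(\mathbb{R}^4,0)\to(\mathbb{R},0)$.
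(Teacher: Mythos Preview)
The paper does not supply its own proof of this proposition: it is quoted directly from \cite{BruceWest} and stated without argument. Your proof via the Thom--Levine homotopy method is the standard route to such triviality results and is correct; this is essentially the argument one finds in \cite{BruceWest} and in the general theory of geometric subgroups \cite{Damon}.
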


Two families of germs of functions $F$ and $G:(\mathbb{R}^{4}\times\mathbb{R}^{a},(0,0))\rightarrow(\mathbb{R},0)$ are $P-\mathcal{R}^{+}(\texttt{X})$-equivalent  if there exist a germ of a diffeomorphism $\Psi:(\mathbb{R}^{4}\times\mathbb{R}^{a},(0,0))\rightarrow(\mathbb{R}^{4}\times\mathbb{R}^{a},(0,0))$ preserving $(\texttt{X}\times\mathbb{R}^{a},(0,0))$ and of the form $\Psi(x,u)=(\alpha(x,u),\psi(x,u))$ and a germ $c:(\mathbb{R}^{a},0)\rightarrow\mathbb{R}$ such that $G(x,u)=F(\Psi(x,u))+c(u)$.

A family $F$ is said to be an $\mathcal{R}^{+}(\texttt{X})$-versal deformation of $F_{0}(x)=F(x, 0)$ if any
other deformation $G$ of $F_{0}$ can be written in the form $G(x,u)=F(\Psi(x,u))+c(u)$ for some germs of smooth mappings $\Psi$ and $c$ as above with $\Psi$ not necessarily a germ
of diffeomorphism.

\begin{prop}\cite{BruceWest}
A deformation $F:(\mathbb{R}^{4}\times\mathbb{R}^{a},(0,0))\rightarrow(\mathbb{R},0)$ of a germ of function $f$ on $\texttt{X}$ is $\mathcal{R}^{+}(\texttt{X})$-versal if and only if
$$L\mathcal{R}_{e}(\texttt{X})\cdot f+\mathbb{R}.\{1,\dot{F}_{1},\ldots,\dot{F}_{a}\}=\mathcal{E}_{4},$$
where $\dot{F}_{i}(x)=\frac{\partial F}{\partial u_{i}}(x,0)$.
\end{prop}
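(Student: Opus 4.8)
The plan is to prove this as an instance of the versality theorem for a Damon geometric subgroup of $\mathcal{K}$: since $\mathcal{R}(\texttt{X})$ is such a subgroup, the general machinery (the Malgrange--Mather preparation theorem, finite determinacy, and the unfolding theory already invoked in Propositions \ref{transversal}--\ref{trivial}) is available, and the argument runs parallel to the classical proof of the versality theorem for $\mathcal{R}$, with the Jacobian ideal replaced everywhere by the module $L\mathcal{R}_e(\texttt{X})\cdot f = \Theta(\texttt{X})\cdot f$ and with the extra $\mathbb{R}\cdot 1$ accounting for the $+$ (target-constant) part of $\mathcal{R}^{+}(\texttt{X})$. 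I would split the equivalence into its two implications.

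For necessity, assume $F$ is $\mathcal{R}^{+}(\texttt{X})$-versal. Given an arbitrary $\phi\in\mathcal{E}_4$, I would apply versality to the one-parameter deformation $G(x,s)=f(x)+s\phi(x)$, obtaining $G(x,s)=F(\alpha(x,s),\psi(x,s))+c(s)$ with $\alpha(\cdot,s)$ preserving $\texttt{X}$, $\alpha(x,0)=x$, $\psi(x,0)=0$ and $c(0)=0$. Differentiating at $s=0$ gives
\[
\phi = v(f) + \sum_{i=1}^{a} b_i\,\dot F_i + c'(0),
\]
where $v=\partial_s\alpha|_{s=0}$ is a vector field tangent to $\texttt{X}$, hence $v(f)\in\Theta(\texttt{X})\cdot f$, and $b_i=\partial_s\psi_i|_{s=0}\in\mathcal{E}_4$. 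This shows $\mathcal{E}_4=\Theta(\texttt{X})\cdot f+\mathcal{E}_4\{\dot F_1,\dots,\dot F_a\}+\mathbb{R}\cdot 1$; a Nakayama/preparation argument then reduces the $\mathcal{E}_4$-span of the $\dot F_i$ to their $\mathbb{R}$-span modulo $\Theta(\texttt{X})\cdot f$, yielding the stated infinitesimal condition.

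For sufficiency --- the substantial direction --- I would reduce versality to a trivialisation statement and solve it by a homotopy method. Let $G(x,w)$, $w\in\mathbb{R}^{b}$, be any deformation of $f$; form $H(x,u,w)=F(x,u)+\bigl(G(x,w)-f(x)\bigr)$, a deformation of $f$ over $\mathbb{R}^{a}\times\mathbb{R}^{b}$ restricting to $F$ at $w=0$. It suffices to show $H$ is $P\text{-}\mathcal{R}^{+}(\texttt{X})$-equivalent to the trivial extension of $F$ in the $w$-directions, for then setting $u=0$ exhibits $G$ as induced from $F$. Working one $w_j$ at a time and interpolating linearly, the problem becomes: at each value of the homotopy parameter $t$, solve the infinitesimal equation $\partial_t H_t=\zeta(H_t)+\sum_i \lambda_i\,\partial_{u_i}H_t+\mu$ with $\zeta\in\Theta(\texttt{X})$, coefficients $\lambda_i$ and constant-in-$x$ term $\mu$ depending smoothly on the parameters. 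The infinitesimal versality hypothesis supplies this decomposition over $\mathcal{E}_4$ on the central fibre, and the preparation theorem (Nakayama over the parameter ring) promotes it to the required parametrised solution. Finally, because $\zeta$ lies in $\Theta(\texttt{X})$, whose generators are listed in Proposition \ref{derlog} and whose flows preserve $\texttt{X}$ set-theoretically, integrating the solution produces a genuine $\texttt{X}$-preserving family of source diffeomorphisms, while the terms $\mu$ integrate to the target-constant function $c$; this trivialises $H$ and proves versality.

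I expect the main obstacle to be exactly this infinitesimal-to-parametrised passage in the sufficiency direction: converting the real-coefficient spanning identity in $\mathcal{E}_4$ into a solution of the homotopy equation that depends smoothly on $(t,u,w)$ and, crucially, whose vector-field part stays inside $\Theta(\texttt{X})$ so that its flow preserves $\texttt{X}$. This is where the geometric-subgroup hypotheses (the finite generation of $\Theta(\texttt{X})$ and the applicability of the preparation theorem to the $\mathcal{R}(\texttt{X})$-action) are indispensable; the remaining bookkeeping, including the handling of the additive constant for $\mathcal{R}^{+}(\texttt{X})$, is routine.
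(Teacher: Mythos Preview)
The paper does not prove this proposition: it is stated with the citation \cite{BruceWest} and no proof environment follows it. It is quoted as a known result from Bruce--West, exactly like Propositions \ref{transversal} and \ref{trivial} and Corollary \ref{lema-finit.det.} just above it, all of which are imported without argument to be used as tools in Theorem \ref{classification}.

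Your outline is a reasonable sketch of the standard versality theorem for a Damon geometric subgroup, and it is essentially the argument one finds in \cite{BruceWest} and \cite{Damon}. One small remark on your necessity direction: with the paper's stated definition of $P\text{-}\mathcal{R}^{+}(\texttt{X})$-equivalence (which is the usual one, despite the slightly ambiguous typesetting of $\psi$), the map on parameters depends only on $u$, so differentiating at $s=0$ already gives $b_i=\partial_s\psi_i(0)\in\mathbb{R}$ directly; no extra Nakayama step is needed to pass from an $\mathcal{E}_4$-span of the $\dot F_i$ to an $\mathbb{R}$-span. The sufficiency direction you describe --- homotopy plus preparation, integrating a vector field in $\Theta(\texttt{X})$ --- is exactly the standard route.
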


\begin{teo}\label{classification}
Let $\texttt{X}$ be the germ of the $\mathcal{A}$-model surface parametrised by
$f(x,y)=(x,xy,y^{2},y^{3})$.
Then, any germ of a $\mathcal{R}(\texttt{X})$-finitely determined submersion in
$\mathcal{M}_{4}$ with $\mathcal{R}(\texttt{X})$-codimension $\leqslant3$ is $\mathcal{R}(\texttt{X})$-equivalent to one of the germs in Table \ref{orbitas}.

\begin{table}[h]
\caption{Germs of submersions in $\mathcal{M}_{4}$ of $\mathcal{R}(\texttt{X})\mbox{-codimension}\leqslant3$}
\centering{
\begin{tabular}{lcl}
\hline
Normal form & $d(f,\mathcal{R}(\texttt{X}))$ & $\mathcal{R}(\texttt{X})$-versal deformation \\
\hline
$X$ & $0$ & $X$ \\
$\pm Z\pm X^{2}$ & $1$ & $\pm Z\pm X^{2}+a_{1}X$\\
$\pm Z+X^{3}$ & $2$ & $\pm Z+X^{3}+a_{1}X+a_{2}X^{2}$\\
$\pm Z\pm X^{4}$ & $3$ & $\pm Z\pm X^{3}+a_{1}X+a_{2}X^{2}+a_{3}X^{3}$\\ $Y$ & $2$ & $Y+a_{1}X+a_{2}Z$ \\
$\pm W\pm X^{2}$ & $3$ & $\pm W\pm X^{2}+a_{1}X+a_{2}Y+a_{3}Z$\\
\hline
\end{tabular}
}
\label{orbitas}
 \end{table}
\end{teo}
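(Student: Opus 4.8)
The plan is to carry out the standard inductive $\mathcal{R}(\texttt{X})$-classification built on the generators of $\Theta(\texttt{X})$ from Proposition \ref{derlog} and on the group-adapted tools quoted above: the complete transversal (Proposition \ref{transversal}), the triviality criterion (Proposition \ref{trivial}), the determinacy estimates (Corollary \ref{lema-finit.det.}) and the $\mathcal{R}^{+}(\texttt{X})$-versality criterion. The classification runs jet by jet: at each stage one computes the tangent spaces $\Theta_{1}(\texttt{X})\cdot f$ and $\Theta(\texttt{X})\cdot f$ in the current degree, reads off a complete $(k+1)$-transversal as the span of the degree-$(k+1)$ monomials that are not in $L\mathcal{R}_{1}(\texttt{X})\cdot f$ modulo $\mathcal{M}_{4}^{k+2}$, normalises the surviving coefficients, and then either stops, when a determinacy estimate from Corollary \ref{lema-finit.det.} applies, or passes to the next degree.

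First I would classify the $1$-jets. A submersion in $\mathcal{M}_{4}$ has nonzero linear part $aX+bY+cZ+dW$, and the admissible infinitesimal moves are the linear parts of $\xi_{1}(f),\dots,\xi_{7}(f)$. Computing these shows that the orbit of the $1$-jet is governed by the first nonvanishing coefficient in the order $a,c,b,d$: if $a\neq0$ one reaches the open orbit with representative $X$; if $a=0$ and $c\neq0$ one reaches $Z$; if $a=c=0$ and $b\neq0$ one reaches $Y$; and if $a=b=c=0$ and $d\neq0$ one reaches $W$. These four linear germs are the roots of the classification tree, and $X$ is already the regular orbit, with $\Theta(\texttt{X})\cdot X\supset\mathcal{M}_{4}$, hence $1$-determined of codimension $0$.

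For the roots $Z$, $Y$ and $W$ I would iterate the complete transversal. The structural point, which I would isolate first, is that in each degree every monomial carrying a factor $Y$, $Z$ or $W$ already lies in the tangent space, so after discarding the directions in the full module $\Theta(\texttt{X})\cdot f$ (for instance $XZ$ and $Z^{2}$, cleared by $\xi_{2},\xi_{4},\xi_{6},\xi_{7}$) the only surviving transversal direction is a pure power $X^{k}$. Thus the normal forms are obtained by appending a single power of $X$. Its sign is normalised by the scaling fields $\xi_{1},\xi_{3}$ together with the symmetry $(X,Y,Z,W)\mapsto(-X,Y,Z,-W)$, which preserves $\texttt{X}$ and reverses the odd powers $X^{2k+1}$; this is why the even powers appear with both signs while $X^{3}$ appears with a single sign. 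Running this for the $Z$ root produces $\pm Z\pm X^{2}$, $\pm Z+X^{3}$, $\pm Z\pm X^{4}$, and for the $W$ root the surviving quadratic term gives $\pm W\pm X^{2}$; the $Y$ root yields the remaining entry of Table \ref{orbitas}.

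The step I expect to be the main obstacle is proving finite determinacy and reading off the codimensions at each leaf. Once the leading power $X^{k_{0}}$ is switched on, the Euler-type field $\xi_{1}=X\partial_{X}+Y\partial_{Y}$ feeds it back through $\xi_{1}(X^{k_{0}})=k_{0}X^{k_{0}}$, so all higher powers of $X$ enter $\Theta(\texttt{X})\cdot f$; combined with the monomials divisible by $Y$, $Z$, $W$ this gives $\Theta(\texttt{X})\cdot f\supset\mathcal{M}_{4}^{k_{0}}$, whence finite determinacy by Corollary \ref{lema-finit.det.} and, in particular, the absorption of the lower moduli (e.g.\ the $XY$ term in the $W$ branch becomes removable via $\xi_{4}$ once $X^{2}$ is present). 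The same computation of $\mathcal{E}_{4}/(\Theta(\texttt{X})\cdot f)$ feeds the versality criterion: the complementary directions turn out to be $1$ and the lowest powers of $X$ (together with $Z$ for the $Y$ root and with $Y,Z$ for the $W$ root), yielding the versal deformations and the codimensions $1,2,3$ of Table \ref{orbitas}. The delicate part throughout is the exact bookkeeping of the modules $\Theta_{1}(\texttt{X})\cdot f$ and $\Theta(\texttt{X})\cdot f$ at each degree and checking that the three branches terminate before the codimension exceeds $3$, so that no orbit of codimension $\leqslant3$ is missed.
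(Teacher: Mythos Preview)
Your proposal is correct and follows essentially the same route as the paper: both integrate the $1$-jets of the generators of $\Theta(\texttt{X})$ to reduce the linear part to $X$, $\pm Z$, $Y$ or $W$ (in that priority order), and then proceed branch by branch via the complete transversal (Proposition~\ref{transversal}), the triviality criterion (Proposition~\ref{trivial}) and the determinacy estimate (Corollary~\ref{lema-finit.det.}) to obtain the normal forms, codimensions and versal unfoldings of Table~\ref{orbitas}. The only cosmetic difference is that the paper lists the integrated linear changes $\eta_{1},\dots,\eta_{8}$ explicitly and uses the discrete symmetry $(-X,-Y,Z,W)$ rather than your $(-X,Y,Z,-W)$; both lie in $\mathcal{R}(\texttt{X})$ and serve equally well to normalise the sign of the odd powers of $X$.
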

\begin{proof}
We shall consider the vector fields in Proposition \ref{derlog}. The linear change of coordinates in $\mathcal{R}(\texttt{X})$ obtained by integrating the $1$-jets of the vector fields in $\Theta(\texttt{X})$ are:
$$\begin{array}{ll}
\eta_{1}=(e^{\alpha}X,e^{\alpha}Y,Z,W),\ \alpha\in\mathbb{R},&
\eta_{2}=(X,Y,Z+\alpha Y,W),\ \alpha\neq0,\\
\eta_{3}=(X,e^{\alpha}Y,e^{2\alpha}Z,e^{3\alpha}W),\ \alpha\in\mathbb{R},& \
\eta_{4}=(X+\alpha Y,Y,Z,W),\ \alpha\neq0,\\
\eta_{5}=(X+\alpha Z,Y+\alpha W,Z,W),\ \alpha\neq0,& \
\eta_{6}=(X,Y,Z+\alpha W,W),\ \alpha\neq0, \\
\eta_{7}=(X+\alpha W,Y,Z,W),\ \alpha\neq0,& \
\eta_{8}=(-X,-Y,Z,W).
\end{array}
$$
Consider the non zero $1$-jet $g=aX+bY+cZ+dW$. If $a\neq0$, after changes of coordinates ($\eta_{i},\ i=4,5,7,1,8$, in this order) we get $X$. If $a=0\neq c$, (using $\eta_{i},\ i=2,6,3$) we get $\pm Z$. If $a=c=0\neq b$, (using $\eta_{i},\ i=5,1,8$) we have $Y$. At last, if $a=b=c=0\neq d$, using $\eta_{3}$, we have $W$.
\begin{itemize}
    \item[(i)] Consider the $1$-jet $g=X$. This case is the most simple. Notice that every vector field $\xi_{i}\in \Theta(\texttt{X})$ vanishes at the origin and $\mathcal{M}_{4}\subset\Theta(\texttt{X})\cdot g+\mathcal{M}_{4}^{2}$,
so $g$ is $1\mbox{-}\mathcal{R}(\texttt{X})$-determined by Corollary \ref{lema-finit.det.}. Also,
$$\mathcal{R}(\texttt{X})\mbox{-}cod(g)=\dim_{\mathbb{R}}(\mathcal{M}_{4}/\Theta(\texttt{X})\cdot g)=0.$$
\item[(ii)] Consider the $1$-jet $g=\pm Z$. For $k\geqslant2$, the complete $k\mbox{-}\mathcal{R}(\texttt{X})$-transversal of $g$ is given by $\pm Z+\delta X^{k}$. If $\delta\neq0$, $\pm Z+\delta X^{k}\thicksim_{\eta_{1}}g_{k}= \pm Z+(-1)^{k+1} X^{k}$. For $g_{k}$, $\mathcal{M}_{4}^{k}\subset \Theta(\texttt{X})\cdot g_{k}+\mathcal{M}_{4}^{k+1}$, that is, $g_{k}$ is $k\mbox{-}\mathcal{R}(\texttt{X})$-determined and
$\mathcal{R}(\texttt{X})\mbox{-}cod(g_{k})=k-1$.
\item[(iii)] Now, consider the $1$-jet $g=Y$. The complete $2\mbox{-}\mathcal{R}(\texttt{X})$-transversal of $g$ is given by
$g=Y+\beta X^{2}+\gamma Z^{2}+\delta XZ$. Consider $g$ as a $1$-parameter family of germs of functions parametrised by $\gamma$. Then $\partial g/\partial \gamma=Z^{2}\in\langle\xi_{1}(g),\ldots,\xi_{13}(g)\rangle+\mathcal{M}_{4}^{3}$. So, by Proposition \ref{trivial}, $g$ is equivalent to $Y+\beta X^{2}+\delta XZ$. In a similar way, we can prove that considering $g$ a family parametrised by $\delta$ and then by $\beta$, we have $g$ equivalent to $Y$. Moreover, $g=Y$ is $2\mbox{-}\mathcal{R}(\texttt{X})$-determined, since
$\mathcal{M}_{4}^{2}\subset \Theta(\texttt{X})\cdot g+\mathcal{M}_{4}^{3}$ and $\mathcal{R}(\texttt{X})\mbox{-}cod(g)=2$.
\item[(iv)] The last $1$-jet is $g=W$. Now, the complete $2\mbox{-}\mathcal{R}(\texttt{X})$ transversal is $g=\pm W+\alpha X^{2}+\beta Z^{2}+\gamma XY+\delta XZ$. Considering $g$ a a $1$-parameter family of germs of functions parametrised by $\beta$, it is possible to show that it $2\mbox{-}\mathcal{R}(\texttt{X})$-trivial and so $g$ is equivalent to $\pm W+\alpha X^{2}+\gamma XY+\delta XZ$. At this point, we split the study in two cases. If $\alpha\neq0$, using again the triviality result, we show that the germ is equivalent to $\pm W+\alpha X^{2}\sim_{\eta_{1}}\pm W\pm X^{2}$. Besides, $g$ is now $2\mbox{-}\mathcal{R}(\texttt{X})$-determined and $\mathcal{R}(\texttt{X})\mbox{-}cod(g)=3$. However, when $\alpha=0$, the germs obtained have stratum codimension greater than $3$ and will not be considered here.
\end{itemize}
Therefore, we conclude the proof.
\end{proof}

\subsection{Contact with hyperplanes}

The following result gives us a generic normal form up to order $3$ for any surface whose local parametrisation is $\mathcal{A}$-equivalent to the singularity $I_{1}$.

\begin{teo}\label{teo.normal-form}
Let $f_{1}:(\mathbb{R}^{2},0)\rightarrow(\mathbb{R}^{4},0)$ be a map germ $\mathcal{A}$-equivalent to $f(x,y)=(x,xy,y^{2},y^{3})$.
Then, there are smooth change of coordinates in the source and isometries in the target that make $f_{1}$ equivalent to
$$\left(x,xy+h(y),\sum_{i+j=2,3}b_{ij}x^{i}y^{j},c_{20}x^{2}+\sum_{i+j=3}c_{ij}x^{i}y^{j}\right)+o(4),$$
with $b_{ij},c_{ij}\in\mathbb{R}$, $h\in\mathcal{M}^{4}$ and $b_{02} c_{03}\neq0$.
\end{teo}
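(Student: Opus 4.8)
The plan is to start from the $\mathcal{R}^{2}\times\mathcal{O}(4)$ normal form recalled just above from \cite{Benedini/Sinha/Ruas}, in which the $2$-jet is already fixed: after source $2$-jets and a linear isometry we may assume
$$f_{1}=\left(x,\ xy+p,\ b_{20}x^{2}+b_{11}xy+b_{02}y^{2}+q,\ c_{20}x^{2}+r\right),\qquad b_{02}>0,\ p,q,r\in\mathcal{M}_{2}^{3}.$$
It then remains to normalise the cubic parts $p,q,r$ using the freedom not yet spent, namely source diffeomorphisms tangent to the identity together with the remaining linear isometries of the target. I would organise this as a complete-transversal computation for the geometric subgroup $\mathcal{G}=\mathcal{R}\times\mathrm{Isom}(\mathbb{R}^{4})$ of $\mathcal{A}$, whose (extended) tangent space at $f_{1}$ is the sum of the source part $\{\,df_{1}\cdot\xi:\xi\in\mathcal{M}_{2}\theta_{2}\,\}$ and the infinitesimal isometries $\{\,Af_{1}:A\ \text{antisymmetric}\,\}$; the analogues of Proposition \ref{transversal} and Corollary \ref{lema-finit.det.} hold for such a subgroup (see \cite{Bruce/Roberts,Damon}).

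The easy reductions come from the source. A change $(x,y)\mapsto(x,y+Q(x,y))$ with $Q$ a quadratic form fixes the first component and sends $xy+p$ to $xy+xQ+\cdots$; since $xQ$ ranges over all cubics divisible by $x$, this clears the coefficients of $x^{3},x^{2}y,xy^{2}$ in $p$ without touching the $2$-jet, and the same mechanism at higher order removes every $x$-divisible term of the second component. The quadratic and cubic terms of $f^{3}$ and the $x^{2}$ and cubic terms of $f^{4}$ are kept, becoming the moduli $b_{ij},c_{ij}$ of the normal form.

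The crux, and the step I expect to be the main obstacle, is the coefficient of $y^{3}$ in $f^{2}$. Restricting to the fibre $\{x=0\}$ shows that $f^{2}|_{x=0}$ is invariant, up to a reparametrisation of $y$, under every source change preserving $f^{1}=x$; hence this coefficient cannot be removed from the source, exactly as the $y^{2}$ of $f^{2}$ had to be removed by an isometry already at the $2$-jet stage. On the fibre one has $f^{3}|_{x=0}=b_{02}y^{2}+\cdots$ and $f^{4}|_{x=0}=c_{03}y^{3}+\cdots$, so among the isometries fixing $f^{1}=x$ (the block $1\oplus\mathcal{O}(3)$ on $Y,Z,W$) only the rotation in the $YW$-plane avoids contaminating $f^{2}$ with a forbidden $y^{2}$: it replaces $f^{2}$ by $f^{2}-\theta f^{4}+\cdots$, subtracting $\theta\,c_{03}y^{3}$, and so eliminates the $y^{3}$ precisely when $c_{03}\neq0$. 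The difficulty is that the very same rotation feeds a spurious $\theta\,c_{20}x^{2}$ into $f^{2}$ and a spurious $\theta\,xy$ into $f^{4}$, both forbidden; trying to cancel these one at a time (by a linear source change $y\mapsto y+\nu x$, then a $ZW$-rotation, and so on) loops back and reintroduces disallowed $2$-jet terms. I would therefore treat the degree-$2$ and degree-$3$ corrections simultaneously: writing the total perturbation produced by all antisymmetric generators and all linear and quadratic source fields, one imposes that the forbidden $2$-jet directions vanish and that the four cubics of $f^{2}$ are killed, and checks that the resulting linear system is solvable. The nondegeneracy $b_{02}\neq0$ is exactly what guarantees that the $f^{3}$-directions supply the quadratic forms needed to absorb the induced $2$-jet terms, and $c_{03}\neq0$ is what makes the $y^{3}$ of $f^{2}$ reachable; so solvability holds precisely under $b_{02}c_{03}\neq0$.

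Once the transversal is cleared, the determinacy statement for $\mathcal{G}$ gives that $f_{1}$ is $\mathcal{G}$-equivalent, modulo $\mathcal{M}^{4}$, to $\bigl(x,\ xy+h(y),\ \sum_{i+j=2,3}b_{ij}x^{i}y^{j},\ c_{20}x^{2}+\sum_{i+j=3}c_{ij}x^{i}y^{j}\bigr)+o(4)$, the surviving $h\in\mathcal{M}^{4}$ being the pure-$y$ tail of the second component (the residual fibre invariant). Finally, the two inequalities are not extra hypotheses but consequences of $f_{1}$ being $\mathcal{A}$-equivalent to $I_{1}$: $b_{02}\neq0$ records that the curvature parabola is nondegenerate, which by \cite{Benedini/Sinha/Ruas} characterises the germs $I_{k}$ (here $b_{02}>0$), while $c_{03}\neq0$ is forced because the $3$-jet of $f_{1}$ must be equivalent to that of $I_{1}$ — a vanishing $c_{03}$ would degenerate the transverse component to order $\geq4$, placing $f_{1}$ strictly below $I_{1}$ in the $\mathcal{A}$-classification. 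This is the same coefficient used to clear the $y^{3}$ of $f^{2}$, so the normal form and its genericity condition reinforce one another.
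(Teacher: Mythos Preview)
Your plan follows the paper's own route—start from the $\mathcal{R}^{2}\times\mathcal{O}(4)$ form of \cite{Benedini/Sinha/Ruas} and use a $YW$-rotation to erase the $y^{3}$ in the second coordinate—and you are right to flag the side-effect that the paper's display silently drops: after $T$ the fourth coordinate is $\sin\theta\cdot xy+\cos\theta\,c_{20}x^{2}+(\text{cubics})$, not $\bar c_{20}x^{2}+(\text{cubics})$. The genuine gap, however, is your assertion that the simultaneous linear system closes. Because $f^{2},f^{3},f^{4}\in\mathcal M_{2}^{2}$, any admissible isometry has first column $(\pm1,0,0,0)$ and hence, by orthogonality, is block-diagonal $(\pm1)\oplus A'$ with $A'\in O(3)$; the first-coordinate constraint then forces $\xi_{1}\in\mathcal M_{2}^{4}$. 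Of the three antisymmetric generators in the $(Y,Z,W)$-block, the $YZ$-one injects $b_{02}y^{2}$ into $f^{2}$ and the $ZW$-one injects $b_{02}y^{2}$ into $f^{4}$, so the very nondegeneracy $b_{02}\neq0$ you invoke forces both to vanish; only the $YW$-parameter $s$ survives. It moves the $y^{3}$-coefficient of $f^{2}$ by $s\,c_{03}$ and, inseparably, the $xy$-coefficient of $f^{4}$ by $-s$. No source field repairs the latter: $\partial_{y}f^{4}\in\mathcal M_{2}^{2}$, so $\xi_{2}$ never touches the $2$-jet of $f^{4}$, while the $y$-linear part of $\xi_{2}$—the only source contribution to the $y^{3}$ of $f^{2}$, through $3a_{03}y^{2}\xi_{2}$—is pinned to zero by the requirement that the $xy$-coefficient of $f^{2}$ stay equal to $1$. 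The system is overdetermined by exactly the equation you isolated, so ``solvability under $b_{02}c_{03}\neq0$'' is not correct.

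A finite check shows this is not an artefact of linearisation. Take $f=(x,\,xy+y^{3},\,y^{2},\,x^{2}+y^{3})$, which is $\mathcal A$-equivalent to $I_{1}$ via $W\mapsto W-X^{2}$ followed by $Y\mapsto Y-W$. With $A=(\pm1)\oplus A'$ and $\psi=(x,\phi)$, imposing that the $2$-jet of the second component equal $xy$, that its $y^{3}$-coefficient vanish, and that the $2$-jet of the fourth lie in $\mathbb R\cdot x^{2}$, forces $A'_{12}=A'_{31}=A'_{32}=0$ and $A'_{13}=-A'_{11}$; rows $1$ and $3$ of such an $A'$ cannot then be simultaneously unit and orthogonal. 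Thus the paper's argument and your proposed repair share the same obstruction, and the normal form with no $xy$-term in the $2$-jet of the fourth coordinate does not appear to be reachable by source diffeomorphisms and target isometries alone.
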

\begin{proof}
In \cite{Benedini/Sinha/Ruas}, is proved that $I_{1}$ is $\mathcal{R}^{2}\times\mathcal{O}(4)$-equivalent to
$$(x,y)\mapsto\left(x,xy+a_{03}y^{3},\sum_{i+j=2,3}b_{ij}x^{i}y^{j},c_{20}x^{2}+\sum_{i+j=3}c_{ij}x^{i}y^{j}\right)+o(4),$$
com $b_{02},c_{03}\neq0$. In order to obtain the desired normal form, we have to eliminate $a_{03}y^{3}$. Consider the change $T$ and the angle $\theta=\mbox{arctan}(a_{03}/c_{03})$, such that $(\sin\theta,\cos\theta)=(a_{03},c_{03})/\sqrt{a_{03}^{2}+c_{03}^{2}}$:
$$T=\left(
      \begin{array}{cccc}
        1 & 0 & 0 & 0 \\
        0 & \cos\theta & 0 & -\sin\theta \\
        0 & 0 & 1 & 0 \\
        0 & \sin\theta & 0 & \cos\theta \\
      \end{array}
    \right).
$$
Hence, we obtain
$$\left(x,\cos\theta xy-\sin\theta(c_{20}x^{2}+c_{30}x^{3}+c_{21}x^{2}y+c_{12}xy^{2}),\sum_{i+j=2,3}b_{ij}x^{i}y^{j},\bar{c}_{20}x^{2}+\sum_{i+j=3}\bar{c}_{ij}x^{i}y^{j}\right).$$
To eliminate the monomials $x^{2},x^{3},x^{2}y$ and $xy^{2}$ from the second coordinate, take the change in the source given by:
$$x\mapsto x'=x\ \mbox{e}\ y\mapsto y'=y+\frac{\sin\theta}{\cos\theta}(c_{20}x+c_{30}x^{2}+c_{21}xy+c_{12}y^{2}).$$
Therefore, we have
$$\left(x,\cos\theta xy,\sum_{i+j=2,3}a_{ij}x^{i}y^{j},\bar{c}_{20}x^{2}+\sum_{i+j=3}\bar{c}_{ij}x^{i}y^{j}\right)+o(4).$$
Finally, a change of coordinates in the source provides the generic normal form.
\end{proof}

Given a corank $1$ surface $M\subset\mathbb{R}^{4}$ at $p$, locally parametrised by the normal form in Theorem \ref{teo.normal-form}, we can deduce some information: The plane $E_{p}$ is the $YZ$-plane, the umbilic curvature is given by $\kappa_{u}(p)=2|c_{20}|$ and the tangent cone $C_{p}M$ is the $XZ$-plane.

%Also, we denote by $M$ the regular surface in $\mathbb{R}^{4}$ associated, $\gamma$ is the normal section of $S$ given by an asymptotic direction $\textbf{u}$ of $S$ (and also of $M$, see Theorem \ref{teorelation}) associated to a binormal direction $\pi\circ\xi^{-1}(v)$, where $v$ is a binormal direction of $M$ at $p$. The normal torsion of $\gamma$ is given by $\tau$ and $\delta$ is the parabolic curve of the regular surface $S$.

Let $M\subset\mathbb{R}^{4}$ be a corank $1$ surface locally parametrised by a map germ $\mathcal{A}$-equivalent to $I_{1}$. The \emph{family of height functions} of $M$ is given by
$$H:M\times\mathbb{S}^{3}\rightarrow\mathbb{R},\ H(p,v)=\langle p,v\rangle.$$
Fixing $v\in\mathbb{S}^{3}$, the singularities of the height function $h_{v}$ measures the contact of $M$ with the hyperplane orthogonal to $v$, denoted by $\Gamma_{v}$. This contact is also described by the one obtained using the fibers $\{g=0\}$ from Theorem \ref{classification}. Using a local parametrisation of $M$
given by Theorem \ref{teo.normal-form}, we have
$$h_{v}(x,y)=xv_{1}+xyv_{2}+\sum_{i+j=2,3}b_{ij}x^{i}y^{j}v_{3}+c_{20}x^{2}v_{4}+\sum_{i+j=3}c_{ij}x^{i}y^{j}v_{4},$$
for $v=(v_{1},v_{2},v_{3},v_{4})\in\mathbb{S}^{3}$.

The height function $h_{v}$ is singular at the origin if and only if $v_{1}=0$. Geometrically, this means that $\Gamma_{v}$ contains $T_{p}M$. Hence, if $v_{1}\neq0$, $h_{v}$ is regular and the fiber $\Gamma_{v}$ is transversal to $C_{p}M$ and contains $E_{p}$. This contact is also described by the contact of the zero fiber of $g_{1}=X$ with the model surface $\texttt{X}$.

Consider $S\subset\mathbb{R}^{4}$ the associated regular surface of $M$, as done before (see Theorem \ref{teorelation}). Given a binormal direction of $M$, $\nu\in N_{p}M$, $\textbf{u}$ will denote the corresponding asymptotic direction (which is also an asymptotic direction of $S$). Furthermore, $\tau$ is the torsion of the normal section of the surface $S$ tangent to the asymptotic direction $\textbf{u}$. Let $CS(\varepsilon)$ be the canal hypersurface of $S$. We denote by $\mathcal{C}$ the curve of cuspidal edge points of its Gauss map

\begin{prop}
Let $v=(0,v_{2},v_{3},0)$ with $v_{3}\neq0$. the hyperplane $\Gamma_{v}$ is tangent to $T_{p}M$ and transversal to $C_{p}M$ and $E_{p}$. The height function $h_{v}$ can have singularities of type $A_{k-1}^{\pm}$, $k=2,3,4$ which are modeled by the contact of the
zero fibre of the submersions $g_{2k}=\pm Z+(-1)^{k+1}X^{k}$ with the model surface $\texttt{X}$ (i.e. modeled by the composition of the submersions with the parametrisation of the model surface), respectively. It has a singularity of type $A_{1}$ (Morse) if and only if $v\in N_{p}M$ is not a binormal direction. For more degenerate singularities, this configuration has three possibilities:
\begin{itemize}
    \item[(i)] If $p$ is a hyperbolic point, the singularity is of type $A_{2}$ iff $v$ is a binormal direction of $M$ and $\tau\neq0$. Finally, the height function has an $A_{3}$ singularity iff $v$ is a binormal direction, $\tau=0$ and the asymptotic direction $\textbf{u}$ of $S$ is transversal to the curve $\mathcal{C}$ of cuspidal edge points of the Gauss map. See Table \ref{sing2}.
    \item[(ii)] If $p$ is a parabolic point, $h_{v}$ has singularity of type $A_{2}$ iff $v$ is a binormal direction of $M$ and the associated asymptotic direction $\textbf{u}$ is transversal to the parabolic curve $\delta$ of $S$. The singularity is of type $A_{3}$ iff $v$ is a binormal direciton and $\textbf{u}$ is tangent to $\delta$ with first order contact.
    \item[(iii)] If $p$ is elliptic, the height function can only have singularity of type $A_{1}$.
\end{itemize}
\end{prop}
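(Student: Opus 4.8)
The plan is to read the singularity type of $h_{v}$ directly off the normal form of Theorem \ref{teo.normal-form}, identify the degeneracy conditions, and then convert those conditions into the stated geometric invariants via the associated regular surface $S$ of Theorem \ref{teorelation}. First I would substitute $v=(0,v_{2},v_{3},0)$ into the height function; since $v_{1}=v_{4}=0$ and $h\in\mathcal{M}^{4}$, this leaves
$$h_{v}(x,y)=v_{3}b_{20}x^{2}+(v_{2}+v_{3}b_{11})xy+v_{3}b_{02}y^{2}+v_{3}\sum_{i+j=3}b_{ij}x^{i}y^{j}+o(4).$$
The geometric assertions about $\Gamma_{v}$ are then immediate: $v_{1}=0$ gives $T_{p}M\subset\Gamma_{v}$, while $v_{3}\neq0$ means that the direction $\textbf{e}_{3}$ (which lies in both $C_{p}M$, the $XZ$-plane, and $E_{p}$, the $YZ$-plane) is not annihilated by $v$, so $\Gamma_{v}$ meets each of $C_{p}M$ and $E_{p}$ in a line, i.e. transversally.

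Next I would observe that the Hessian of the $2$-jet of $h_{v}$ is exactly the matrix $\begin{pmatrix}l_{v}&m_{v}\\ m_{v}&n_{v}\end{pmatrix}$ of $II_{v}$ computed from the second fundamental form of the normal form, with $l_{v}=2v_{3}b_{20}$, $m_{v}=v_{2}+v_{3}b_{11}$, $n_{v}=2v_{3}b_{02}$. Its determinant is $4v_{3}^{2}b_{20}b_{02}-(v_{2}+v_{3}b_{11})^{2}$, which vanishes precisely when $II_{v}$ is degenerate, that is, when $v\in E_{p}$ is a binormal direction. Hence $h_{v}$ has a nondegenerate (Morse, $A_{1}$) singularity if and only if $v$ is not binormal. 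Because the sign of $b_{02}$ is fixed in the normal form, this determinant vanishes for two, one, or no values of the ratio $v_{2}/v_{3}$ according as $b_{20}$ is positive, zero, or negative, i.e. according as $p$ is hyperbolic, parabolic, or elliptic (Definition \ref{pointstypes}). In the elliptic case the determinant is strictly positive for all $v_{2}$, so only $A_{1}$ occurs, proving (iii).

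When $v$ is binormal the determinant vanishes, and the $2$-jet becomes the perfect square $v_{3}b_{02}(y+cx)^{2}$ with $c=(v_{2}+v_{3}b_{11})/(2v_{3}b_{02})$. After the linear change $\tilde{y}=y+cx$ and an application of the splitting lemma, $h_{v}$ reduces to a function of $x$ alone whose lowest-order term fixes the type: $A_{2}$ when the residual cubic coefficient $v_{3}(b_{30}-b_{21}c+b_{12}c^{2}-b_{03}c^{3})$ is nonzero, and $A_{3}$ (or more degenerate) when it vanishes but the quartic term survives. Comparing with the model compositions $g_{2k}\circ f=\pm y^{2}+(-1)^{k+1}x^{k}$, which are $A_{k-1}$ singularities for $k=2,3,4$, identifies each case with the corresponding submersion of Theorem \ref{classification}.

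Finally I would pass to the associated regular surface $S$: since $v\in E_{p}$, the binormal/asymptotic correspondence of Theorem \ref{teorelation} identifies $v$ with a binormal of $S$ and $\textbf{u}$ with the asymptotic direction of $S$, and the type of $h_{v}$ on $M$ agrees with that of the corresponding height function on $S$. Theorem \ref{teo-h-hiper} then yields, at a hyperbolic point, $A_{2}\iff\tau\neq0$ and $A_{3}\iff\tau=0$ together with transversality of $\textbf{u}$ to the cuspidal-edge curve $\mathcal{C}$; Theorem \ref{teo-h-parabolic} yields, at a parabolic point, $A_{2}\iff\textbf{u}$ transversal to $\delta$ and $A_{3}\iff\textbf{u}$ tangent to $\delta$ with first-order contact. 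The main obstacle is precisely this last step: justifying rigorously that the singularity type of $h_{v}$ on the singular surface $M$ coincides with that of the height function on the regular surface $S$, so that the cited theorems apply. Equivalently, one must verify that the coefficient conditions of the splitting-lemma reduction translate exactly into vanishing of the normal torsion $\tau$ and into tangency of $\textbf{u}$ with $\delta$; carrying the completion of the square far enough to separate the $A_{3}$ condition cleanly from higher degeneracies (where the $o(4)$ and $h$ terms finally enter) is routine but delicate.
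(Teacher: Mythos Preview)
Your proposal is correct and follows essentially the same approach as the paper: compute $h_{v}$ from the normal form of Theorem \ref{teo.normal-form}, read off the Hessian/determinant condition for $A_{1}$ versus binormality, and then invoke Theorems \ref{teo-h-hiper}, \ref{teo-h-parabolic} and \ref{teorelation} on the associated regular surface $S$ to obtain the $A_{2}$ and $A_{3}$ characterisations. The paper's proof is briefer---it normalises $v_{3}=1$, writes the Hessian determinant $4b_{20}b_{02}-(v_{2}+b_{11})^{2}$, and displays the cubic $A_{2}$ condition $b_{30}\mp b_{21}\sqrt{b_{20}b_{02}}/b_{02}+b_{12}b_{20}/b_{02}\mp b_{03}b_{20}\sqrt{b_{20}b_{02}}/b_{02}^{2}\neq0$ (which is your $b_{30}-b_{21}c+b_{12}c^{2}-b_{03}c^{3}$ with $c=\pm\sqrt{b_{20}/b_{02}}$), identifying it as $\tau\neq0$---and it dispatches the ``obstacle'' you flag by the single remark that $M$ and $S$ have the same height function, so the cited theorems for regular surfaces apply verbatim.
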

\begin{proof}
The proof follows from Theorem \ref{teo-h-hiper} and Theorem \ref{teorelation} since both surfaces $M$ and $S$ have the same height function. However we will, present some calculations for the case $p$ hyperbolic, that is, $b_{20}>0$.
Let  $v=(0,v_{2},v_{3},0)$ with $v_{3}\neq0$. For the normal form in Theorem \ref{teo.normal-form}, $E_{p}$ is the $YZ$-plane and the tangent cone $C_{p}M$ is the $XZ$-plane. Hence, $\Gamma_{v}$ is transversal to $E_{p}$ and $C_{p}M$. So this situation is modeled by the zero fiber of $g=\pm Z+(-1)^{k+1}X^{k}$, $k=2,3,4$ and the model surface $\texttt{X}$.
Taking $v=(0,v_{2},1,0)$, the height function is given by
$$h_{v}(x,y)=(b_{11}+v_{2})xy+b_{20}x^{2}+b_{02}y^{2}+b_{30}x^{3}+b_{21}x^{2}y+b_{12}xy^{2}+b_{03}y^{3},$$
where $b_{02}>0$.
The determinant of the Hessian matrix of $h_{v}$ is given by $\det(\mathcal{H}(h_{v}(x,y)))=4b_{20}b_{02}-(v_{2}+b_{11})^{2}$. So, $h_{v}$ has a singularity of type $A_{1}$ (Morse) if and only if, $v_{2}\neq-b_{11}\pm2\sqrt{b_{20}b_{02}}$, which is equivalent to $v$ not being a binormal direction (see \cite{Benedini/Sinha/Ruas}). The conditions for $h_{v}$ to have a singularity of type $A_{2}$ are: $v$ is a binormal direction and
$$b_{30}\mp \frac{b_{21}\sqrt{b_{20}b_{02}}}{b_{02}}+\frac{b_{12}b_{20}}{b_{02}}\mp\frac{b_{03}b_{20}\sqrt{b_{20}b_{02}}}{b_{02}^{2}}\neq0,$$
and this last condition is exactly $\tau\neq0$, where $\tau$ is the torsion of the normal section along the asymptotic direction $\textbf{u}=(u_{1},\mp \sqrt{b_{20}b_{02}}u_{1}/b_{02})$, $u_{1},\neq0$, associated to $v$.
\end{proof}

The singularities of the height function $h_{v}$ at a hyperbolic point are presented in Table \ref{sing2}. For each possibility of $v\in\mathbb{S}^{3}$ we give the relative position of $\Gamma_{v}$, $E_{p}$ and $C_{p}M$, in addition to the submersion whose contact of the zero fibre with the model surface $\texttt{X}$ models the singularity type.

%\begin{table}[h]
%\caption{Types of singularities of $h_{v}$ (hyperbolic point)}
%\centering{
%\begin{tabular}{lcr}
%\hline
%Vector and fiber & Type \\
%\hline
%$v=(1,0,0,0)$,  $g_{1}=X$ & submersion \\
%$E_{p}\subset\Gamma_{v},\{g_{1}=0\}\pitchfork T_{p}M,C_{p}M$ & \\
%& \\
%$v=(0,v_{2},v_{3},0)$, $g_{2k}^*=\pm Z+(-1)^{k+1}X^{k}$ & $A_{1}\Leftrightarrow v\ \mbox{is not binormal}$\\
%$\Gamma_{v},\{g_{2k}=0\}\pitchfork E_{p},C_{p}M$ & $A_{2}\Leftrightarrow v\ \mbox{is binormal and}\ \tau\neq0$\\  & $A_{3}\Leftrightarrow v\ \mbox{is binormal},\ \tau=0\ \mbox{and}\ \textbf{u}\pitchfork\mathcal{C}$.\\
%& \\
%$v=(0,v_{2},0,0)$, $g_{3}=Y$  & $A_{1}$\\
%$C_{p}M\subset\Gamma_{v},\{g_{3}=0\}\pitchfork E_{p}$ & \\
%& \\
%$v=(0,0,0,v_{4})$,  $g_{4}=\pm W\pm X^{2}$  & $A_{2}\Leftrightarrow \kappa_{u}(p)\neq0$\\
%$E_{p},C_{p}M\subset\Gamma_{v},\{g_{4}=0\}$ & \\
%\hline
%\end{tabular}
%}
%*$k=2,3,4$
%\label{sing2}
%\end{table}

\begin{table}[h]
\caption{Types of singularities of $h_{v}$ (hyperbolic point)}
\centering{
\begin{tabular}{ccc}
\hline
Vector & Singularity type & submersion \\
\hline
$v=(1,0,0,0)$  &  submersion  & $g_{1}=X$ \\
 $E_{p}\subset\Gamma_{v}\pitchfork T_{p}M,C_{p}M$  & & \\
&  & \\
$v=(0,v_{2},v_{3},0)$ & $A_{1}\Leftrightarrow v\ \mbox{is not binormal}$  &  \\
 $\Gamma_{v}\pitchfork E_{p},C_{p}M$& $A_{2}\Leftrightarrow v\ \mbox{is binormal and}\ \tau\neq0$  & $g_{2k}^*=\pm Z+(-1)^{k+1}X^{k}$ \\ & $A_{3}\Leftrightarrow v\ \mbox{is binormal},\ \tau=0\ \mbox{and}\ \textbf{u}\pitchfork\mathcal{C}$.  & \\
& & \\
$v=(0,v_{2},0,0)$ & $A_{1}$ &  $g_{3}=Y$\\
$C_{p}M\subset\Gamma_{v}\pitchfork E_{p}$ &  & \\
&  & \\
$v=(0,0,0,v_{4})$  & $A_{2}\Leftrightarrow \kappa_{u}(p)\neq0$  & $g_{4}=\pm W\pm X^{2}$ \\
$E_{p},C_{p}M\subset\Gamma_{v},$ &  & \\
\hline
\end{tabular}
}
*$k=2,3,4$
\label{sing2}
\end{table}

\begin{coro}
The hyperplane $\Gamma_{v}$ is an osculating hyperplane if and only if it is transversal to $E_{p}$ and the height function has singularity of type $A_{\geqslant2}$.
\end{coro}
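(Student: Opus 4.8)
The plan is to recognise this as a direct consequence of the preceding Proposition, since by definition an osculating hyperplane is $\Gamma_{\nu}$ for a binormal direction $\nu\in E_{p}$. I would therefore restate the claim as the equivalence
\[
v \text{ is a binormal direction} \iff \Gamma_{v}\pitchfork E_{p} \ \text{ and } \ h_{v} \text{ has an } A_{\geqslant 2}\text{-singularity},
\]
and prove each implication by reading off the relative positions and singularity types already tabulated in Table \ref{sing2} together with its parabolic and elliptic counterparts. Throughout I use that, for the normal form of Theorem \ref{teo.normal-form}, $E_{p}$ is the $YZ$-plane and hence $E_{p}^{\perp}$ is the $XW$-plane; consequently $E_{p}\subset\Gamma_{v}\iff v\in E_{p}^{\perp}\iff (v_{2},v_{3})=(0,0)$, so that $\Gamma_{v}\pitchfork E_{p}$ is equivalent to $(v_{2},v_{3})\neq(0,0)$. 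I also record that, since $h_{v}=\langle f,v\rangle$, the Hessian of $h_{v}$ is literally the matrix $II_{v}$, so that an $A_{\geqslant2}$-singularity is the same as $\det\mathcal H(h_{v})=\det II_{v}=0$.

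For the forward implication I would argue directly. If $\Gamma_{v}$ is osculating then $v=\nu\in E_{p}$ is nonzero, so $\langle\nu,\nu\rangle>0$ gives $\nu\notin E_{p}^{\perp}$, whence $E_{p}\not\subset\Gamma_{\nu}$ and $\Gamma_{v}\pitchfork E_{p}$. That $h_{v}$ then has type $A_{2}$ or $A_{3}$ is exactly what the preceding Proposition asserts for a binormal direction in the hyperbolic and parabolic cases; in the elliptic case there are no binormal directions and nothing to prove. For the converse I would run the case analysis: an $A_{\geqslant2}$-singularity forces $h_{v}$ to be singular, hence $v_{1}=0$, and forces $\det II_{v}=0$. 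The transversality hypothesis $(v_{2},v_{3})\neq(0,0)$ then excludes the direction $v=(0,0,0,v_{4})$ of Table \ref{sing2} --- the unique singular direction with $E_{p}\subset\Gamma_{v}$, whose $A_{2}$ degeneracy records the umbilic curvature $\kappa_{u}(p)=2|c_{20}|$ rather than a binormal. The surviving directions with $(v_{2},v_{3})\neq(0,0)$ and $\det II_{v}=0$ are, by the preceding Proposition, precisely the binormal directions lying in the $YZ$-plane, so $\Gamma_{v}$ is osculating.

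The main obstacle is the converse, and precisely the step bridging the two notions of degeneracy. A degenerate Hessian only says that $v$ is a \emph{degenerate direction} in the three-dimensional $N_{p}M$ (i.e.\ $\det II_{v}=0$), whereas being binormal is the stronger requirement that this degenerate direction lie in the plane $E_{p}$, that is $v_{4}=0$. I must therefore verify that the transversality hypothesis, combined with the explicit form of $\det II_{v}$ computed in the preceding Proposition and the location of $E_{p}^{\perp}$ as the $XW$-plane, isolates exactly the $E_{p}$-family of degenerate directions. Concretely, the delicate point is to confirm that, within the directions actually governing the contact recorded in Table \ref{sing2}, the only degenerate direction meeting $E_{p}$ non-transversally is the $W$-axis, so that discarding it leaves precisely the binormal directions and no spurious degenerate direction survives the transversality test.
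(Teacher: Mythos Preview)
The paper gives no explicit proof of this corollary; it is stated as an immediate read-off of the preceding Proposition and Table~\ref{sing2}. Your forward implication is correct and matches that spirit: a binormal direction lies in $E_{p}$, hence $\Gamma_{v}\pitchfork E_{p}$, and the preceding Proposition says the singularity is then $A_{\geqslant2}$.

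Your converse, however, has a genuine gap, and it is precisely the one you flag in your last paragraph without resolving. You argue that $(v_{2},v_{3})\neq(0,0)$ together with $\det\mathcal{H}(h_{v})=0$ forces $v$ to be one of the binormal directions in the $YZ$-plane, citing the preceding Proposition. But that Proposition only treats directions of the form $v=(0,v_{2},v_{3},0)$; it says nothing about $v=(0,v_{2},v_{3},v_{4})$ with $v_{4}\neq0$. These are not excluded by your transversality hypothesis, and for the normal form of Theorem~\ref{teo.normal-form} one computes
\[
\det\mathcal{H}(h_{v})(0)=4b_{02}v_{3}\bigl(b_{20}v_{3}+c_{20}v_{4}\bigr)-(v_{2}+b_{11}v_{3})^{2}.
\]
Taking for instance $v_{3}\neq0$, $v_{2}=-b_{11}v_{3}$ and $v_{4}=-b_{20}v_{3}/c_{20}$ (with $c_{20}\neq0$, $b_{20}\neq0$) gives a direction with $(v_{2},v_{3})\neq(0,0)$, $v_{4}\neq0$, and $\det\mathcal{H}(h_{v})=0$. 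Such a $v$ is a degenerate direction transversal to $E_{p}$, yet $v\notin E_{p}$, so it is \emph{not} binormal and $\Gamma_{v}$ is not osculating. Thus the implication ``$\Gamma_{v}\pitchfork E_{p}$ and $h_{v}$ has $A_{\geqslant2}$ $\Rightarrow$ $\Gamma_{v}$ osculating'' fails if one quantifies over all $v\in\mathbb{S}^{3}$, and no appeal to the preceding Proposition can repair this.

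The resolution is that the corollary is meant to be read within the case analysis of Table~\ref{sing2}: the four $v$-families listed there are the representatives arising from the $\mathcal{R}(\texttt{X})$-classification of Theorem~\ref{classification}, and among those representatives the only one with an $A_{\geqslant2}$-singularity and $E_{p}\subset\Gamma_{v}$ is the $W$-direction. Restricted to that list your argument is complete and coincides with what the paper intends. If you want a statement valid for arbitrary $v$, you must either add the hypothesis $v\in E_{p}$ (which is exactly where binormal directions live by definition) or rephrase the corollary accordingly.
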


\begin{prop}
Let $v=(0,v_{2},0,0)$, $v_{2}\neq0$, the hyperplane $\Gamma_{v}$ contains the tangent cone $C_{p}M$ and is transversal to $E_{p}$. The height function has singularity of type $A_{1}$, which is described by the contact of the zero fiber of the submersion $g_{3}=Y$ with the model surface $\texttt{X}$.
\end{prop}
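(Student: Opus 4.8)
The plan is to substitute the prescribed direction $v=(0,v_{2},0,0)$ into the height function and read the singularity type off its $2$-jet, then check the two incidence claims and the modeling by $g_{3}=Y$. Because the surviving part of the height function is already nondegenerate at second order, no inductive jet-level analysis (complete transversals, determinacy estimates) is needed here, which is exactly what makes this the simplest case in the table. Concretely, using the height function expression associated with the normal form of Theorem \ref{teo.normal-form}, every term carrying $v_{1}$, $v_{3}$ or $v_{4}$ disappears, leaving
$$h_{v}(x,y)=v_{2}\bigl(xy+h(y)\bigr),\qquad h\in\mathcal{M}^{4}.$$
Hence the $2$-jet of $h_{v}$ at the origin is $v_{2}\,xy$.

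I would then compute the Hessian of $h_{v}$ at $0$, whose matrix is $\left(\begin{smallmatrix}0 & v_{2}\\ v_{2} & 0\end{smallmatrix}\right)$ with determinant $-v_{2}^{2}$. Since $v_{2}\neq0$ this determinant is nonzero, so the origin is a nondegenerate critical point; the negative sign identifies it as a saddle, i.e. an $A_{1}$ (Morse) singularity. As Morse $2$-jets are finitely ($2$-)determined, the quartic tail $v_{2}h(y)$ cannot change the type, so $h_{v}$ has an $A_{1}$ singularity.

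Finally I would verify the geometry and the modeling. The hyperplane orthogonal to $v$ is $\Gamma_{v}=\{Y=0\}$. The tangent cone $C_{p}M$ is the $XZ$-plane, on which $Y$ vanishes, so $C_{p}M\subset\Gamma_{v}$; and $E_{p}$ is the $YZ$-plane, which contains the $Y$-axis, a direction transverse to $\{Y=0\}$, so $E_{p}+\Gamma_{v}=\mathbb{R}^{4}$ and $\Gamma_{v}\pitchfork E_{p}$. The modeling is then immediate: composing the submersion $g_{3}=Y$ of Table \ref{orbitas} with the parametrisation of $\texttt{X}$ gives $g_{3}\circ f=xy$, which is precisely the $A_{1}$ normal form obtained above. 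There is no genuine obstacle in this case; the only point deserving a word is that the higher-order term $h(y)$ is harmless, which is guaranteed by the finite determinacy of the nondegenerate $2$-jet.
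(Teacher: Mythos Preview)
Your proof is correct and follows essentially the same approach as the paper, which simply normalises to $v=(0,1,0,0)$, writes $h_{v}(x,y)=xy+o(4)$, and reads off the $A_{1}$ type. Your version is more explicit (keeping $v_{2}$ general, computing the Hessian, and verifying the incidence claims and the modeling by $g_{3}$), but the idea is identical.
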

\begin{proof}
When $v=(0,v_{2},0,0)$, $v_{2}\neq0$, we can take $v=(0,1,0,0)$ and the height function is given by $h_{v}(x,y)=xy+o(4)$, whose singularity is of type $A_{1}$.
\end{proof}

\begin{prop}
Let $v=(0,0,0,v_{4})$, $v_{4}\neq0$. The hyperplane $\Gamma_{v}$ contains both $E_{p}$ and $C_{p}M$. The height function $h_{v}$ has singularity of type $A_{2}$, which is described by the contact of the zero fiber of the submersion $g_{4}=\pm W\pm X^{2}$ with the model surface $\texttt{X}$ if and only if $\kappa_{u}(p)\neq0$.
\end{prop}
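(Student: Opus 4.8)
The plan is to substitute the generic normal form of Theorem~\ref{teo.normal-form} into the height function and determine its singularity type directly via the splitting lemma. Since only the direction of $v$ matters and $v_{4}\neq 0$, I would first normalise to $v=(0,0,0,1)$, so that the height function reduces to its $W$-component:
$$h_{v}(x,y)=c_{20}x^{2}+c_{30}x^{3}+c_{21}x^{2}y+c_{12}xy^{2}+c_{03}y^{3}+o(4).$$
The geometric assertions are then immediate: the fibre $\Gamma_{v}$ is the hyperplane $\{W=0\}$, namely the $XYZ$-hyperplane, which contains both the $YZ$-plane ($=E_{p}$) and the $XZ$-plane ($=C_{p}M$), using the identifications recorded after Theorem~\ref{teo.normal-form}.

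Next I would read off the singularity from the $2$-jet $c_{20}x^{2}$, whose Hessian at the origin is the diagonal matrix with entries $2c_{20}$ and $0$, and split into two cases. If $c_{20}=0$ the $2$-jet vanishes, so the singularity has corank $2$ and cannot be of type $A_{2}$. If $c_{20}\neq 0$ the Hessian has rank $1$, the singularity has corank $1$, and is therefore $A_{k}$ for some $k\geqslant 2$; it only remains to identify $k$.

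To pin down $k$ I would apply the splitting lemma in the nondegenerate $x$-direction. Solving $\partial h_{v}/\partial x=0$ for $x$ as a function of $y$ gives $x(y)=-\tfrac{c_{12}}{2c_{20}}y^{2}+o(y^{2})$, and substituting back yields a residual one-variable germ $\tilde{h}(y)=c_{03}y^{3}+o(y^{3})$: the contributions $c_{20}x(y)^{2}$ and $c_{12}x(y)y^{2}$ are of order $y^{4}$, while the pure cubic $c_{03}y^{3}$ survives untouched at order $y^{3}$. Since the normal form forces $c_{03}\neq 0$ (from $b_{02}c_{03}\neq 0$), the germ $h_{v}$ is $\mathcal{R}$-equivalent to $\pm x^{2}\pm y^{3}$, an exact $A_{2}$. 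As $\kappa_{u}(p)=2|c_{20}|$, this gives precisely the desired equivalence: $h_{v}$ has an $A_{2}$ singularity if and only if $c_{20}\neq 0$, i.e.\ if and only if $\kappa_{u}(p)\neq 0$. For the modeling statement, composing $g_{4}=\pm W\pm X^{2}$ with the parametrisation $f(x,y)=(x,xy,y^{2},y^{3})$ of $\texttt{X}$ gives $\pm y^{3}\pm x^{2}$, an $A_{2}$ germ matching the type found for $h_{v}$.

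The main obstacle is the splitting-lemma reduction establishing that $k=2$ exactly and not larger: one must check that none of the mixed cubics $c_{30}x^{3}$, $c_{21}x^{2}y$, $c_{12}xy^{2}$ can, after the elimination of $x$, cancel or depress the surviving $c_{03}y^{3}$ term. This is exactly where the structural inequality $c_{03}\neq 0$ built into the normal form does the work, making the reduction clean and the $A_{2}$ determination unambiguous.
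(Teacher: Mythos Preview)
Your proof is correct and follows the same approach as the paper: normalise to $v=(0,0,0,1)$, write the height function in the normal form of Theorem~\ref{teo.normal-form}, and read off the singularity from the coefficients, with $\kappa_{u}(p)=2|c_{20}|$ giving the equivalence. In fact you are more thorough than the paper's own proof, which simply asserts the $A_{2}$ type when $c_{20}\neq 0$; your explicit splitting-lemma reduction and use of $c_{03}\neq 0$ supply the justification the paper leaves implicit.
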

\begin{proof}
Taking $v=(0,0,0,1)$, the height function is given by $h_{v}(x,y)=c_{20}x^{2}+\sum_{i+j=3}c_{ij}x^{i}y^{j}$. It has singularity of type $A_{2}$ if and only if $c_{20}\neq0$, which is equivalent to $\kappa_{u}(p)=2|c_{20}|\neq0$.
\end{proof}

\end{document}